\newcommand{\Q}{\mathbb{Q}}
\newcommand{\G}{\Gamma}
\newcommand{\R}{\mathbb{R}}
\newcommand{\Z}{\mathbb{Z}}
\newtheorem{theorem}{Theorem}[section]
\newtheorem{lemma}[theorem]{Lemma}
\newtheorem{corollary}[theorem]{Corollary}
\newtheorem{proposition}[theorem]{Proposition}
\newtheorem{definition}[theorem]{Definition}
\newtheorem{question}[theorem]{Question}
\newtheorem{thmx}{Theorem}
\newtheorem{corx}[thmx]{Corollary}
\theoremstyle{remark}
\newtheorem{remark}[theorem]{Remark}
\begin{document}

\title{Finite sets containing zero are mapping degree sets}

\author[C.\ Costoya]{Cristina Costoya}
\address{CITMAga, Departamento de Matem\'aticas,
Universidade de Santiago de Compostela, 15782-Santiago de Compostela, Spain.}
\email{cristina.costoya@usc.es}

\author[V.\ Mu\~{n}oz]{Vicente Mu\~{n}oz}
\address{Instituto de Matem\'atica Interdisciplinar and
Departamento de \'Algebra, Geometr\'{\i}a y Topolog\'{\i}a, Universidad Complutense de Madrid, Plaza de las Ciencias, 3, 28040-Madrid, Spain}
\email{vicente.munoz@ucm.es}

\author[A.\ Viruel]{Antonio Viruel}
\address{Departamento de \'Algebra, Geometr\'{\i}a y Topolog\'{\i}a, Universidad de M\'alaga,
Campus de Teatinos, s/n, 29071-M\'alaga, Spain}
\email{viruel@uma.es}

\subjclass{55M25, 57N65, 55P62, 55R10}
\keywords{Mapping degree sets, inflexible manifold, fiber bundle, unstable Adams operation}

\begin{abstract}
In this paper we solve in the positive  the question of whether any finite set of integers, containing $0$, is  the mapping degree set between two oriented closed connected manifolds of the same dimension. We extend this question to the rational setting, where an affirmative answer is also given.
\end{abstract}

\maketitle

\section{Introduction}\label{sec:intro}

In this paper, we settle in the positive various questions which have been raised about $D(M,N)$, the set of mapping degrees between two oriented closed connected manifolds $M$ and $N$ of the same dimension: $$D(M,N)=\{ d \in \Z\, |\, \exists f:M\to N, \, \deg(f)=d\}.$$

C.\ Neofytidis, S.\ Wang, and Z.\ Wang  \cite[Problem 1.1]{NWW} discuss the problem of finding, for every set $A \subset \mathbb Z$ containing $0$, two oriented closed connected manifolds $M$ and $N$ of the same dimension such that $A=D(M,N)$. Note that $0 \in A$ is a necessary  condition  as the constant map $M \rightarrow N$ is of degree zero.

 A cardinality argument  shows that when $A$ is an infinite set,  the problem above is solved in the negative \cite[Theorem 1.3]{NWW}. Indeed,  there are uncountably many infinite subsets  of $\mathbb Z$ containing $0$, compared to the  countably many  mapping degree sets that exist for pairs of oriented closed connected manifolds with the same dimension.  Hence, \emph{most} of the infinite sets are not realizable as mapping degree sets.  Now, using a computability argument, C.\ L{\"o}h and M.\ Uschold \cite[Proposition A.1]{LohUsc} prove that $D(M,N)$ is  a recursively enumerable set. This provides  a sufficient condition for infinite sets of integers (containing $0$)  not being realizable as mapping degree sets (see \cite[Example A.2]{LohUsc}).

Thus, one might ask:

\begin{question}[{\cite[Problem 1.4]{NWW}}] \label{mainquestion}
Let $A$ be a finite set of integers containing $0$. Is $A=D(M,N)$  for some oriented closed connected  manifolds $M, N$ of the same dimension?
\end{question}

\begin{remark}\label{rem:inflexible}
It is important to notice that
 if $\{0\} \subsetneq A= D(M,N)$, $A$ finite,
for some manifolds $M$ and $N$,  then $D(M, M)$ and $D(N,N)$ must both  be contained in $\{0, 1, -1\}$. Otherwise, if there exists $g\colon M \rightarrow M$ with $|\deg(g)| >1,$ then for every  non-zero degree $f\colon  M \rightarrow N$  (which exists by assumption),  the subset  $\{ \deg(f \circ g^m) \mid m \in \mathbb N \} $  of $D(M, N)$ is unbounded. This leads to a contradiction as $A=D(M, N)$ is finite.  The same  follows for $D(N,N)$.
\end{remark}

An  oriented closed connected manifold $M$ satisfying $D(M, M) \subset  \{0, 1, -1\}$ is called an \emph{inflexible} manifold \cite[Definition 1.4]{CL}.   This condition is equivalent to asking  that $D(M,M)$ is bounded: since it is a multiplicative semi-group, if there exists any $\ell\in D(M,M)$ with $|\ell|>1$, then $D(M,M)$ is unbounded.  Simply connected inflexible manifolds are rare objects that have appeared quite  recently in literature  using rational homotopy theory and surgery theory (see  \cite{CMuV1} for an account on the simply connected inflexible manifolds that are known at present). Not surprisingly,  and in light of Remark \ref{rem:inflexible}, part of our key constructions will use rational homotopy methods.

The main result in this work answers Question \ref{mainquestion} positively:

\begin{thmx}\label{thm:main-3-manifolds}
Let $A$ be a finite set of integers containing $0$.  Then, $A= D(M,N)$ for some oriented closed connected $3$-manifolds $M,N$.
\end{thmx}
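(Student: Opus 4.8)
The plan is to realize $A = \{0, d_1, \dots, d_k\}$ (where we may assume the $d_i$ are positive by composing with an orientation-reversing self-diffeomorphism if $A$ is symmetric, and in general allow mixed signs) as $D(M,N)$ with $M$ and $N$ explicit closed oriented $3$-manifolds. The natural source of inflexible $3$-manifolds is hyperbolic geometry: by Mostow rigidity and Gromov--Thurston's proportionality principle, a closed oriented hyperbolic $3$-manifold $X$ has $D(X,X) \subseteq \{0,1,-1\}$, and more generally a nonzero-degree map between closed hyperbolic $3$-manifolds of the same volume is homotopic to an isometry (a covering map of degree $\pm 1$), while any map between them has degree bounded by the ratio of volumes. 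So I would build $N$ from a single hyperbolic piece and build $M$ as a connected sum or a more elaborate graph-manifold-type construction whose JSJ/geometric pieces are copies of (pieces covered by) $N$'s hyperbolic piece, arranged so that the only available degrees are exactly the prescribed $d_i$.

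Concretely, here is the skeleton I would carry out. \textbf{Step 1:} Fix a closed oriented hyperbolic $3$-manifold $P$ that is \emph{chiral} (admits no orientation-reversing self-map of nonzero degree) and has no nontrivial self-covers; take $N = P$. Then $D(N,N) = \{0,1\}$. \textbf{Step 2:} For the given $d_i$, one needs a closed oriented $3$-manifold $Q_i$ admitting a degree-$d_i$ map to $P$ but controlled in the sense that $D(Q_i, P)$ is as small as possible; a $d_i$-fold cover of $P$ (if $\pi_1(P)$ has a subgroup of index $d_i$ — which one can arrange, or circumvent by a branched-cover/pinch construction since every closed oriented $3$-manifold admits a nonzero degree map from such covers) does the job, giving $d_i \in D(Q_i, P)$. \textbf{Step 3:} Set $M = Q_1 \,\#\, Q_2 \,\#\, \cdots \,\#\, Q_k$. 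A map $M \to N$ can be built by pinching all but one summand and using the degree-$d_i$ map on the surviving $Q_i$, realizing each $d_i$; more generally pinching to a wedge and mapping summands independently lets one add degrees, so one must instead choose the $Q_i$ and $d_i$ so that the additive closure is controlled — this forces the real construction to be cleverer than a plain connected sum. \textbf{Step 4:} Conversely, analyze \emph{every} map $f \colon M \to N$: using the fact that $N=P$ is aspherical and hyperbolic, $f$ restricted to each prime summand of $M$ has a well-defined degree into $P$, these degrees are each constrained by volume/Mostow rigidity to a small finite set, and $\deg f$ is their sum; one then arranges the building blocks so the set of realizable sums is exactly $A$.

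\textbf{Main obstacle.} The hard part is \textbf{Step 4} — the converse inclusion $D(M,N) \subseteq A$. It is easy to \emph{produce} maps of the desired degrees, but controlling \emph{all} maps $M \to N$ requires that the prime decomposition of $M$ together with the rigidity of the hyperbolic target pin down the degree set exactly, with no unexpected extra degrees arising from maps that mix or collapse summands in unforeseen ways. This is where I expect the authors to use a carefully engineered $M$ (likely a connected sum, or a manifold glued from hyperbolic pieces along tori, with volumes/combinatorics tuned to the arithmetic of $A$) together with the additivity of degree under pinching and the boundedness $|\deg| \le \operatorname{vol}(M_i)/\operatorname{vol}(P)$ on each piece, plus chirality to kill sign ambiguities. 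Getting the realizable sums to equal precisely $\{0\}\cup\{d_1,\dots,d_k\}$ — rather than the full sumset — is the delicate combinatorial-geometric input, and I would expect the proof to handle it by a construction in which at most one summand can map nontrivially (e.g. by volume constraints that forbid two summands simultaneously having nonzero degree).
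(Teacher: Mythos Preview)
Your proposal correctly locates the main obstacle, but the fix you suggest cannot work. With a \emph{single} aspherical target $N=P$, one has the exact equality $D(M_1\#\cdots\# M_k,P)=D(M_1,P)+\cdots+D(M_k,P)$ (this is Lemma~\ref{lem:lemas_CL+NWW}, using $\pi_2(P)=0$), so $D(M,P)$ is forced to be the full sumset of the pieces. Your proposed remedy---volume constraints that ``forbid two summands simultaneously having nonzero degree''---does not obstruct this: the pinch map $Q_1\#Q_2\to Q_1\vee Q_2\to P$ is always available and already realizes $d_1+d_2$, independently of any volumes. Volume bounds only cap the degree from each summand separately; they do nothing to prevent addition. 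So as long as $N$ is a single aspherical manifold, you will always get the sumset rather than $A$.

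The paper's mechanism, which your sketch lacks, is to vary the \emph{target} as well. First, an arithmetic lemma (Proposition~\ref{prop:clave-aritmetica}) writes $A=\bigcap_{i=0}^{n}S_{B(i)}$ as an intersection of sumsets $S_{B(i)}=\sum_{b\in B(i)}\{0,b\}$. Second, each sumset is realized by a pair $(M_i,N_i)$ with $D(M_i,N_i)=S_{B(i)}$ and $D(M_i,N_j)=\{0\}$ for $i\ne j$; the building blocks are not hyperbolic $3$-manifolds but circle bundles $K_\ell\to\Sigma_g$ over a fixed hyperbolic surface, for which the exact formula $D(K_i,K_j)=\{0,j/i\}$ (or $\{0\}$ when $i\nmid j$) is available. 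Finally one sets $M=\#_i M_i$ and $N=\#_i N_i$: taking connected sums on the \emph{target} side converts the problem into an intersection (Lemma~\ref{lem:lema_NWW}, Proposition~\ref{prop:3}, Corollary~\ref{cor:iteratedsums}), and the off-diagonal vanishing $D(M_i,N_j)=\{0\}$ then yields $D(M,N)=\bigcap_i S_{B(i)}=A$. This two-level ``sums on the source, intersections via the target'' architecture is the missing idea.
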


The proof of this theorem will be carried out at the end of Section \ref{sec:proofmainthm}. Appealing to  \cite[Example 1.5]{NWW}, we point out that the $3$-dimension of the manifolds is the lowest possible.

A second problem related to Question \ref{mainquestion} is also treated in this paper. More precisely, let the  \emph{rational mapping degree set} between oriented closed connected $n$-manifolds $M, N$ be the following set
\begin{gather*}
 D_\Q(M,N)=\{ d \in \Q\, |\, \exists f\colon(M_{(0)},  [M]_\Q )\to (N_{(0)}, [N]_\Q ), \, \deg(f)=d\},
 \end{gather*}
where $[M] \in H^n (M; \mathbb Z)$  denotes the cohomological fundamental class of $M$,  $[M]_\Q \in H^n(M; \Q) $ denotes the rational cohomological fundamental class  of $M$, and $M_{(0)}$ the rationalization of $M$. 
Note that we must specify the choice of fundamental class for $M_{(0)}$ in order to define the degree of a map since this class is not determined by the space $M_{(0)}$. In other words, different choices of manifolds $M$ can result in the same space $M_{(0)}$ with  different fundamental classes.

Then, we raise the following question, which can be thought of as a rational version of \cite[Problem 1.4]{NWW}:

  \begin{question}\label{mainquestionrational}
Let $A$ be a finite set of rational numbers containing $0$. Is $A=D_\Q(M,N)$  for some oriented closed connected manifolds $M, N$ of the same dimension?
\end{question}
In Section \ref{sec:rational-mappingdegree} we solve this problem in the positive by proving:

\begin{thmx}\label{thm:main-rational}
Let  $A$ be a finite set of rational numbers containing $0$. Then  $A=D_\Q(M,N)$ for some oriented closed connected manifolds $M,N$. Moreover, given any integer $k\geq 47$, the manifolds $M,N$ above can be chosen $k$-connected.
\end{thmx}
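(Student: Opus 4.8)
The plan is to fix, once and for all, a highly connected closed oriented $n$-manifold $N$ that is \emph{rationally rigid} in the strong sense $D_\Q(N,N)=\{0,1\}$, and then to realize $A=\{0,a_1,\dots,a_r\}$ (with the $a_i\in\Q\setminus\{0\}$; the degenerate case $r=0$ being settled by taking for $M$ a second, rationally unrelated, rigid manifold) by building a source manifold $M$ of the same dimension $n$ as a connected sum of $r$ ``twisted copies of $N$''. Rationally rigid inflexible manifolds of this kind are produced by the rational-homotopy and surgery machinery underlying the known simply connected inflexible manifolds, cf.\ \cite{CL,CMuV1} and the references therein; the essential feature inherited from those constructions is that the non-formality that forces $D_\Q(N,N)$ to be bounded also forces the rational degrees of \emph{all} maps into $N$ into a finite set, while still leaving enough freedom to prescribe one chosen value at a time. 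As will be seen at the end, $N$ can be taken $(30k+17)$-connected, indeed in any dimension $n$ in the relevant family (which grows linearly with the connectivity).

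First, for each $i=1,\dots,r$ I would construct an auxiliary closed oriented $n$-manifold $N_i$ together with a map $f_i\colon N_i\to N$ with $\deg f_i=a_i$ and $D_\Q(N_i,N)=\{0,a_i\}$; morally $N_i$ is $N$ with its fundamental class rescaled by $a_i$. This is where fibre bundles and unstable Adams operations enter: one realizes the numerator of $a_i$ by an unstable Adams operation on the fibre of a bundle over (a piece of) $N$, which multiplies the relevant rational cohomology generator — and hence $[N_i]_\Q$ — by the prescribed integer, and the denominator by a self-map of an odd-sphere (respectively odd-sphere-bundle) factor, whose rationalization $K(\Q,\mathrm{odd})$ admits a self-map of every rational degree; meanwhile the rigid part coming from $N$ pins the remaining scalars down, so that the only nonzero rational degree a map $(N_i)_{(0)}\to N_{(0)}$ can have is $a_i$ itself. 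One also arranges the $N_i$ from pairwise ``rationally independent'' building blocks — say with new generators placed in mutually disjoint degrees — so that the summands will not interact in the next step.

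Next, set $M=N_1\#\cdots\#N_r$; since $n\ge 3$ the collapse maps $c_i\colon M\to N_i$ have degree $1$, so $f_i\circ c_i\colon M\to N$ has degree $a_i$, and the constant map has degree $0$, whence $A\subseteq D_\Q(M,N)$. The reverse inclusion is the heart of the matter. Rationally, the minimal Sullivan model of a connected sum is an ``amalgam'' of the minimal models of the $N_i$ in which their fundamental classes are identified and cross-products are killed; combining this description with the rigidity of $N$ and the rational independence of the $N_i$, one shows that any morphism from the minimal model $\mathcal N$ of $N$ to the minimal model $\mathcal M$ of $M$ is, up to homotopy, supported on a single summand $N_i$, and therefore has degree $\deg(f_i\circ c_i)\in\{0,a_i\}$ (or else it is the zero map). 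Hence $D_\Q(M,N)=A$.

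The hard part is the combination of the two middle steps: engineering, for an \emph{arbitrary} $a_i\in\Q$ — in particular one with nontrivial denominator $\ell$ — a manifold $N_i$ whose rational mapping-degree set into the rigid $N$ is the \emph{two}-element set $\{0,a_i\}$, since the denominator naturally wants to generate the whole subgroup $\tfrac{1}{\ell}\Z$ of degrees and must be cut back to the single value $a_i$ by the inflexible part of $N_i$; and then verifying that forming the connected sum $N_1\#\cdots\#N_r$ does not create unexpected maps to $N$ (the rational homotopy type of a connected sum is substantially more involved than that of its summands, so one must check that the interaction terms appearing in its Sullivan model are invisible to maps out of $\mathcal N$). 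Granting this, the connectivity statement is obtained by running the entire construction in a dimension $n$ large enough that $N$, the $N_i$, and $M$ are all $(30k+17)$-connected — using that the cited inflexible families have connectivity linear in the dimension, and that connected sum preserves $j$-connectivity whenever $j\le n-2$ — while every argument above, having used only $n\ge 3$ and simple connectivity, remains valid.
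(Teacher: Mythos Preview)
Your reverse inclusion $D_\Q(M,N)\subseteq A$ has a genuine gap: the claim that every cdga morphism $\mathcal N\to\mathcal M$ is ``supported on a single summand $N_i$'' is false, and in fact cannot be arranged in your setup. Since each $N_i$ admits $f_i\colon N_i\to N$ of degree $a_i$, the pinch--then--fold composite
\[
M=N_1\#\cdots\#N_r\xrightarrow{\ q\ }N_1\vee\cdots\vee N_r\xrightarrow{\,f_1\vee\cdots\vee f_r\,}N
\]
has degree $a_1+\cdots+a_r$, and collapsing all but the summands in any $I\subseteq\{1,\dots,r\}$ yields a map of degree $\sum_{i\in I}a_i$. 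This is exactly the containment $D_\Q(M_1,N)+D_\Q(M_2,N)\subseteq D_\Q(M_1\#M_2,N)$ of Lemma~\ref{lem:lemas_CL+NWW}/\ref{prop_2:lemas_CL+NWW}. Thus $D_\Q(M,N)$ always contains the full sumset $\sum_{i=1}^r\{0,a_i\}$, which strictly contains $A$ whenever $A$ is not closed under subset sums (e.g.\ $A=\{0,1,3\}$ forces $4\in D_\Q(M,N)$). Placing extra generators of the $N_i$ in disjoint degrees does not help: each $N_i$ must still carry the portion of $N$ needed to support $f_i$, and that common portion is precisely what the fold map adds up.

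The paper's proof avoids this by an arithmetic layer you are missing. One first writes $A$ as an \emph{intersection} of sumsets, $A=\bigcap_{i=0}^n S_{B(i)}$ (Corollary~\ref{cor:clave-aritmetica-racional}), and then takes a connected sum on the \emph{target} side as well. For each $i$ one uses a separate inflexible base $A_k(\Gamma_i)$ (pairwise non-isomorphic graphs $\Gamma_i$ with the same vertex count, so that $D_\Q(M_i,N_j)=\{0\}$ for $i\neq j$ by Lemma~\ref{lem:degrees-K_d-grafos-rational}), sets $M_i=\operatornamewithlimits{\#}_{b\in B(i)}M_{K_{b^{-1}}(\Gamma_i,k)}$ and $N_i=M_{K_1(\Gamma_i,k)}$, so that $D_\Q(M_i,N_i)=S_{B(i)}$ by Lemma~\ref{prop_2:lemas_CL+NWW}; then $M=\#_i M_i$, $N=\#_i N_i$ give $D_\Q(M,N)=\bigcap_i S_{B(i)}=A$ by Proposition~\ref{prop:3_rational}. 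The source-side connected sum is the additive phenomenon you encountered; the target-side connected sum is what cuts the sumsets back down to $A$ via intersection, and your single fixed target $N$ provides no mechanism for that step. (Incidentally, unstable Adams operations are not used here; in the rational world one simply prescribes the Euler class $q[\mu]$ for any $q\in\Q$ directly, as in Definition~\ref{def:K_i-rational}. They enter only in the integral Theorem~\ref{thm:main-many-dimensions-manifolds}.)
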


The proofs of  Theorem \ref{thm:main-3-manifolds} and Theorem \ref{thm:main-rational} consist primarily of two main steps:

\begin{itemize}
\item \emph{Arithmetical decomposition of finite sets: } In Section \ref{sec:combinatoric} we demonstrate how to decompose the candidate $A$ to be realized as the mapping degree set of manifolds, as an intersection of   sums   over specifically designed sequences $S_{B_i},$ $i =0, \ldots, n,$ of integers  (see Definition \ref{def:sumover}). Each of those sums gradually approaches $A$ (Proposition \ref{prop:clave-aritmetica}, Corollary \ref{cor:clave-aritmetica-racional}).

\smallskip

\item \emph{Spherical fibrations:} In Sections \ref{sec:degree_connected}  and \ref{sec:rational-mappingdegree}, we use  certain inflexible manifolds  (respectively, inflexible Sullivan algebras) as the basis of spherical fibrations  where the total spaces are also inflexible manifolds (respectively, inflexible Sullivan algebras).  The relations between connected sums and mapping degree sets (see Propositions \ref{prop:iteratedsums}, \ref{prop:3_rational}, and \ref{prop:iteratedsums_simply_connected}) allow us to consider iterated connected sums of the total spaces, first to realize the sums $S_{B_i}$ above mentioned, and subsequently to realize the candidate $A$.
 \end{itemize}

Looking at the connectivity,  while manifolds from Theorem  \ref{thm:main-rational} are simply connected (indeed, they are as highly connected as desired) the ones from Theorem \ref{thm:main-3-manifolds} have non-trivial fundamental group.  In Section \ref{sec:adamsoperations} we will use unstable Adams operations to prove the following results that guarantee that manifolds realizing finite sets of integers can be chosen simply connected:

\begin{thmx}\label{thm:main-many-dimensions-manifolds}
Suppose that there exists an oriented closed $k$-connected $2m$-manifold $\Sigma$, $m>1$, satisfying that $\Sigma_{(0)}$ is inflexible and $\pi_j(\Sigma_{(0)})=0$ for $j\geq 2m-1$. Then any finite set of integers $A$ containing $0$ can be realized as $A=D(M,N)$ for some oriented closed $k$-connected $(4m-1)$-manifolds $M, N$.
\end{thmx}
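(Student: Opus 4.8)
The proof will follow the blueprint of Theorems~\ref{thm:main-3-manifolds} and~\ref{thm:main-rational}: first decompose $A$ arithmetically, then realize the resulting pieces, and finally their intersection, by iterated connected sums of total spaces of spherical fibrations. The new feature is that every fibration will have the fixed $k$-connected inflexible manifold $\Sigma$ as base, and the family of fibrations together with the comparison maps between their total spaces will be produced by means of the unstable Adams operations of Section~\ref{sec:adamsoperations}. The first step is purely arithmetic: apply Proposition~\ref{prop:clave-aritmetica} to write $A=\bigcap_{i=0}^{n} S_{B_i}$ as an intersection of sums over suitable finite sequences of integers $B_i$ (Definition~\ref{def:sumover}); everything that follows is aimed at realizing each $S_{B_i}$ and then the intersection.

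\emph{Building blocks.} Fix a sequence $B=(b_1,\dots,b_r)$ and let $q$ range over the partial products of its entries. The plan is to construct oriented $S^{2m-1}$\dash fibrations $S^{2m-1}\to E_q\to\Sigma$, together with fibrewise maps $E_q\to E_{q'}$ of suitable fibre degrees, by composing the classifying map of one fixed fibration of nonzero Euler number with the unstable Adams operations $\psi^{q}$ on the classifying space of the structure group (e.g.\ $BU(m)$); these operations are available in the relevant range, for which $m>1$ is needed, and they carry natural comparison maps on the associated sphere bundles inducing the required degrees. Each $E_q$ is an oriented closed $(4m-1)$-manifold, and it is $k$-connected: $\Sigma$ is $k$-connected with $k\le 2m-3$ (its top rational cohomology is nonzero while $\pi_j(\Sigma_{(0)})=0$ for $j\ge 2m-1$, so $\pi_j(\Sigma_{(0)})\neq 0$ for some $j\le 2m-2$), and the fibre $S^{2m-1}$ is $(2m-2)$-connected. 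It remains to see that $(E_q)_{(0)}$ is inflexible. The hypothesis $\pi_j(\Sigma_{(0)})=0$ for $j\ge 2m-1$ guarantees that the minimal Sullivan model of $E_q$ has the form $(\Lambda V_\Sigma\otimes\Lambda(x_{2m-1}),D)$ with $V_\Sigma$ concentrated in degrees $\le 2m-2$, with $D(V_\Sigma)\subseteq\Lambda V_\Sigma$ the minimal model of $\Sigma_{(0)}$, and with $Dx=\zeta\in(\Lambda^{\ge2}V_\Sigma)^{2m}$ a cocycle representing a nonzero multiple of the fundamental class of $\Sigma$; in particular $H^{2m}(E_q;\Q)=0$ and $H^{4m-1}(E_q;\Q)=\Q$. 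Any self\dash map $\phi$ of $E_q$ then satisfies $\phi^*(\Lambda V_\Sigma)\subseteq\Lambda V_\Sigma$, hence restricts to a self\dash map of the model of $\Sigma_{(0)}$; reading the relation $D\phi^*(x)=\phi^*(\zeta)$ in $H^{2m}(\Sigma_{(0)})=\Q$ shows that the degree of $\phi$ on $H^{4m-1}(E_q;\Q)$ equals the degree of the induced self\dash map of $\Sigma_{(0)}$, which lies in $\{0,1,-1\}$ since $\Sigma_{(0)}$ is inflexible. Thus $(E_q)_{(0)}$, and a fortiori $E_q$, is inflexible.

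\emph{Assembling $A$.} With these building blocks the argument of Section~\ref{sec:degree_connected} applies essentially verbatim. Using Proposition~\ref{prop:3} to compute the mapping degree set of a connected sum of total spaces, iterated connected sums of the $E_q$ realize each $S_{B_i}$ as $D(M_i,N_i)$ for appropriate oriented closed $k$-connected $(4m-1)$-manifolds $M_i,N_i$; then, passing to a connected sum in the target $N=N_0\#\cdots\#N_n$ (a map into $N$ composed with the pinch maps onto the summands has degree realizable against every $N_i$, hence lying in $\bigcap_i D(M_i,N_i)$), one obtains $A=D(M,N)$ for oriented closed $k$-connected $(4m-1)$-manifolds $M,N$; these connected sums remain $k$-connected because $4m-1\ge 7$.

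The main obstacle is the second step: arranging, via the unstable Adams operations over the fixed base $\Sigma$, that the total spaces $E_q$ are simultaneously $k$-connected, inflexible, and linked by maps of exactly the degrees demanded by the arithmetic of the $S_{B_i}$ --- in particular that the Adams twist leaves the rational Euler class of the fibration nonzero, so that the rigidity computation above goes through, and that no unexpected maps between the $E_q$ arise. Once this is in place, the first and third steps are a formal repetition of the integral and connected\dash sum bookkeeping already carried out for Theorems~\ref{thm:main-3-manifolds} and~\ref{thm:main-rational}.
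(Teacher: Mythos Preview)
Your outline has the right architecture, and your last paragraph correctly identifies where the work lies. But you have not actually solved that ``main obstacle,'' and the way you phrase the Adams-operations step shows you are missing the specific constraint that drives the whole proof.

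The gap is this. An unstable Adams operation $\psi^{r}$ on $BU(m)$ (or $\varphi^{r}$ on $B\mathrm{SO}(2m)$) exists only when $\gcd(r,|W|)=1$, i.e.\ $\gcd(r,m!)=1$; the condition ``$m>1$'' is not what makes them ``available in the relevant range.'' More importantly, such an operation acts on $H^{2m}$ by multiplication by $r^{m}$, so composing the classifying map with $\psi^{r}$ multiplies the Euler class by $r^{m}$, and the resulting comparison map between total spaces has degree $r^{m}$, not $r$. In the paper's notation, $D(E_{i^{m}},E_{j^{m}})=\{0,(j/i)^{m}\}$ when $i\mid j$ (Lemma~\ref{lem:grados_E_i}). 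Thus the only degrees you can manufacture between building blocks are signed $m$-th powers of integers coprime to $m!$. Your sequences $B_i$ must therefore consist of such numbers, and this is precisely the content of the ``Moreover'' clause of Proposition~\ref{prop:clave-aritmetica}, which you invoke but do not use. Without it the scheme cannot produce, say, the element $b=2\in B_i$ as a mapping degree between two of your $E_q$'s. Your ``let $q$ range over the partial products'' and ``$\psi^{q}$'' hide exactly this mismatch.

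A couple of smaller points. First, to get $D(M_i,N_j)=\{0\}$ for $i\neq j$, the paper inserts an extra large prime factor $q_i^{m}$ (with $q_i$ coprime to $m!$ and distinct for each $i$) into the Euler numbers of the $i$-th block; you will need some such device, and it does not appear in your sketch. Second, your inflexibility computation is slightly off: with $Dx=\zeta$ and $\phi^{*}\zeta=\tilde d\,\zeta$ in $H^{2m}(\Sigma_{(0)})$, one finds $\deg\phi=\tilde d^{\,2}$ on $H^{4m-1}$, not $\tilde d$; the conclusion (inflexibility) survives, but the argument as written does not. Finally, the equality $D(M_i,N_i)=S_{B(i)}$ uses not just Proposition~\ref{prop:3} but the rational vanishing $\pi_{4m-2}((N_i)_{(0)})=0$ (which holds since the minimal model has no generator in that degree) to bound $D$ from above via $D_\Q$ and Lemma~\ref{lem:degrees-K_d}; make sure you can justify the upper inclusion, not only the lower one coming from the pinch maps.
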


Examples of simply connected  manifolds fulfilling the hypotheses of Theorem \ref{thm:main-many-dimensions-manifolds} can be found in \cite[Example 3.8]{Amann}, \cite[Examples 5.1 and 5.2]{AL} and  \cite[Theorem 6.8, Theorem II.5]{CL} (see also \cite[Theorem 1.4]{NeoProd}). Hence, the following holds:

\begin{corx}\label{cor:main-simply connected-manifolds}
Any finite set of integers $A$ containing $0$ can be realized as $A=D(M,N)$ for some  oriented closed simply connected manifolds $M, N$ of the same dimension.
\end{corx}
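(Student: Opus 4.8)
The plan is to derive this as an immediate consequence of Theorem~\ref{thm:main-many-dimensions-manifolds}. That theorem reduces the task to producing, for some integer $m>1$, an oriented closed \emph{simply connected} $2m$-manifold $\Sigma$ whose rationalization $\Sigma_{(0)}$ is inflexible and satisfies $\pi_j(\Sigma_{(0)})=0$ for all $j\geq 2m-1$. Feeding such a $\Sigma$ into Theorem~\ref{thm:main-many-dimensions-manifolds} with $k=1$ yields, for every finite set $A\subset\Z$ with $0\in A$, oriented closed simply connected $(4m-1)$-manifolds $M,N$ --- which have the same dimension by construction --- with $A=D(M,N)$. The arithmetical decomposition of $A$ (Proposition~\ref{prop:clave-aritmetica}) and the spherical fibration argument are already internal to Theorem~\ref{thm:main-many-dimensions-manifolds}, so none of those steps has to be re-run here.

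Thus the whole proof comes down to exhibiting a single manifold $\Sigma$ with the stated properties. For this I would invoke the known constructions of simply connected inflexible manifolds, namely \cite[Example~3.8]{Amann} and \cite[Theorem~6.8, Theorem~II.5]{CL} (see also \cite[Theorem~1.4]{NeoProd}): each such $\Sigma$ carries an explicit minimal Sullivan model, from which one simultaneously checks that $\Sigma_{(0)}$ is inflexible and that its rational homotopy vanishes above degree $2m-2$, i.e.\ $\pi_j(\Sigma_{(0)})=0$ for $j\geq 2m-1$. It then suffices to select such an example of dimension $2m$ with $m\geq 2$, which these references supply.

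The single delicate point --- essentially the only content beyond quoting Theorem~\ref{thm:main-many-dimensions-manifolds} --- is checking the homotopy-vanishing condition $\pi_j(\Sigma_{(0)})=0$ for $j\geq 2m-1$ on the candidate $\Sigma$: inflexibility is the property these sources highlight, whereas the fact that $\pi_*(\Sigma)\otimes\Q$ is concentrated in degrees $\leq 2m-2$ is a separate feature that must be read off from the minimal model of the chosen $\Sigma$. Once one such $\Sigma$ is fixed, Corollary~\ref{cor:main-simply connected-manifolds} follows immediately.
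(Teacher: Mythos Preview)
Your proposal is correct and matches the paper's own argument essentially verbatim: the paper derives Corollary~\ref{cor:main-simply connected-manifolds} by citing exactly the same sources (\cite[Example~3.8]{Amann}, \cite[Theorem~6.8, Theorem~II.5]{CL}, \cite[Theorem~1.4]{NeoProd}) to supply a simply connected $\Sigma$ meeting the hypotheses of Theorem~\ref{thm:main-many-dimensions-manifolds}, and then invokes that theorem. Your observation that the condition $\pi_j(\Sigma_{(0)})=0$ for $j\geq 2m-1$ must be read off the minimal model of the chosen example is apt, and is implicit in the paper's phrasing ``fulfilling the hypotheses''.
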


Also, we would like to point out that in every dimension $n>6$, there exist infinitely many nilmanifolds $\Sigma$ satisfying that $\Sigma_{(0)}$ is inflexible, as it follows from \cite{AncCamp}, \cite{Bel} and \cite{Has}.  Hence, we have the following:

\begin{corx}\label{cor:nilmanifolds}
For every $m>3$, any finite set of integers $A$ containing $0$ can be realized as $A=D(M,N)$ for some oriented closed connected $(4m-1)$-manifolds $M, N$.
\end{corx}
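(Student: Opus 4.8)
The plan is to deduce Corollary~\ref{cor:nilmanifolds} from Theorem~\ref{thm:main-many-dimensions-manifolds} by feeding it the inflexible nilmanifolds just recalled. Fix an integer $m>3$, so that $2m>6$, and let $\Sigma=\Gamma\backslash G$ be a closed nilmanifold of dimension $2m$ with $\Sigma_{(0)}$ inflexible, whose existence is guaranteed by \cite{AncCamp}, \cite{Bel}, \cite{Has}. I would first dispatch the elementary hypotheses of Theorem~\ref{thm:main-many-dimensions-manifolds}: $\Sigma$ is closed of even dimension $2m$ with $m>1$; it is orientable, being parallelizable (a left-invariant framing of $G$ descends to $\Gamma\backslash G$), so fix an orientation; inflexibility of $\Sigma_{(0)}$ is exactly the quoted input; and since $\Sigma$ is a $K(\Gamma,1)$ we have $\pi_j(\Sigma)=0$ for all $j\geq 2$, whence $\pi_j(\Sigma_{(0)})=\pi_j(\Sigma)\otimes\Q=0$ for every $j\geq 2$, in particular for $j\geq 2m-1\geq 7$.

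Granting these, Theorem~\ref{thm:main-many-dimensions-manifolds} applied to $\Sigma$ produces, for every finite set $A\subset\Z$ with $0\in A$, oriented closed $(4m-1)$-manifolds $M$ and $N$ with $A=D(M,N)$. Letting $\Sigma$ range over inflexible nilmanifolds of all even dimensions $2m>6$ realises every such $A$ in every target dimension of the form $4m-1$ with $m>3$, which is what the corollary requires as far as the mapping degree set and the dimension are concerned.

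The step I expect to be the genuine obstacle is the claim that $M$ and $N$ may be chosen \emph{simply connected}. A nilmanifold $\Gamma\backslash G$ has $\pi_1=\Gamma\neq 1$, and Theorem~\ref{thm:main-many-dimensions-manifolds} only transports the connectivity of its input to its output: in the proof, $M$ and $N$ are assembled by iterated connected sum from total spaces of bundles $S^{2m-1}\to E\to\Sigma$, and $\pi_1(E)\cong\pi_1(\Sigma)$ because the fibre dimension $2m-1$ is at least $2$. So an inflexible nilmanifold yields only $0$-connected $M,N$. To obtain genuinely simply connected manifolds one must instead supply, in each even dimension $2m>6$, a \emph{simply connected} closed $2m$-manifold $\Sigma'$ with $\Sigma'_{(0)}$ inflexible and $\pi_j(\Sigma'_{(0)})=0$ for $j\geq 2m-1$; constructing such $\Sigma'$ in all these dimensions---presumably from the same inflexible nilpotent Lie algebras by a surgery-theoretic realisation in the style of \cite{CL}---is where the real work lies. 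Once such $\Sigma'$ is in hand, the arithmetic decomposition of Section~\ref{sec:combinatoric} and the connected-sum degree computations of Section~\ref{sec:degree_connected} apply verbatim, exactly as in the proofs of Theorem~\ref{thm:main-3-manifolds} and Theorem~\ref{thm:main-rational}, yielding the asserted simply connected $M,N$.
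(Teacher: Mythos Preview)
Your reduction is the paper's: cite the existence (via \cite{AncCamp}, \cite{Bel}, \cite{Has}) of nilmanifolds $\Sigma$ with inflexible rationalization in each dimension $>6$, verify the easy hypotheses of Theorem~\ref{thm:main-many-dimensions-manifolds} (orientability from parallelizability, and $\pi_j(\Sigma_{(0)})=0$ for $j\geq 2$ from asphericity), and invoke that theorem. The paper writes nothing more than the sentence preceding the corollary.

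You have also correctly isolated the point on which the paper is silent. A nilmanifold is only $0$-connected, and in the construction of Section~\ref{sec:adamsoperations} the long exact sequence of the bundle $S^{2m-1}\to E_{r^m}\to\Sigma$ gives $\pi_1(E_{r^m})\cong\pi_1(\Sigma)\neq 1$; connected sums then have fundamental group the free product of these. Thus Theorem~\ref{thm:main-many-dimensions-manifolds} with nilmanifold input yields only \emph{connected} $M,N$, and nothing in Sections~\ref{sec:degree_connected}--\ref{sec:adamsoperations} upgrades connectivity. The paper offers no additional argument, so either ``simply connected'' in Corollary~\ref{cor:nilmanifolds} is a slip for ``connected'', or there is a genuine gap. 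Your suggested repair---realize the same rational homotopy type by a simply connected $\Sigma'$ via Sullivan--Barge---cannot work verbatim: the minimal model of a nilmanifold has generators in degree~$1$ and therefore is not the model of any simply connected space. A genuinely different simply connected input in each even dimension $2m>6$ would be required, and neither you nor the paper supplies one.
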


Recall that an oriented closed connected manifold is \emph{strongly chiral}  if it does not admit  self-maps of degree $-1$ (see \cite{MullnerAGT}). Also recall that a subset $A \subset \mathbb Z$ is said \emph{symmetric} if $A=-A$ where $-A = \{ -a \,|\, a \in A\}$. We finish the introduction by mentioning that our results provide us with a method for obtaining strongly chiral manifolds:

\begin{remark}\label{rem:symmetric}
Let $A$ be a finite set of integers strictly containing $0$.  Theorem \ref{thm:main-3-manifolds} and  Theorem \ref{thm:main-many-dimensions-manifolds} illustrate how to construct  oriented closed connected manifolds  $M, N$  satisfying that  $A = D(M,N)$.  Those manifolds need to be inflexible, as mentioned in Remark \ref{rem:inflexible}.  Moreover,  if  we choose a {non-symmetric} set $A$, then $M$ and $N$ need also to be strongly-chiral manifolds. Otherwise,  if $-1 \in D(M,M)$ (respectively, $-1 \in D(N,N)$),  since  $D(M,M)$ (respectively, $D(N,N)$) acts on $A=D(M,N)$  by multiplication on the left (respectively, right),  the set $A$ would be symmetric, leading thus to a contradiction.   
\end{remark}

\begin{remark}\label{rem:gen_comment_1}
Most of our arguments are homotopic in nature, which blurs the distinction between a map and the homotopy class it represents. Consequently, most of our  diagrams are commutative up to homotopy. However, it is important to note that the manifolds under consideration are smooth; hence, we often substitute maps with smooth ones within the same homotopy class.\end{remark}

\subsection*{Funding}
This work was partially supported by MCIN/AEI/10.13039/501100011033 [PID2020-
115155GB-I00 to C.C., PID2020-118452GB-I00 to V.M., and PID2020-118753GB-I00 to A.V.]. The third author was also partially supported by PAIDI 2020 (Andalusia) grant PROYEXCEL-00827.

\subsection*{Acknowledgments}
We would like to express our sincere gratitude to C.\ Neofytidis for pointing out an error in a previous version of this manuscript. We also thank the referee for a thorough reading and many useful suggestions.

\section{Some arithmetic combinatorics}\label{sec:combinatoric}

In this section we show that   every  finite set $A \subset \mathbb Z$  (respectively, $\subset \mathbb Q$)  containing $0$  can be expressed  as the intersection of sums over certain sequences of integers, that gradually approach $A$. The sequences have an additional property (see Proposition \ref{prop:clave-aritmetica})  that will be crucial to prove Theorem \ref{thm:main-many-dimensions-manifolds} in Section \ref{sec:adamsoperations} below.

\begin{definition}\label{def:sumover}
Let $B=(b_i)_{i\in I}$ be a finite sequence of integers (respectively, rational numbers). We write
\begin{equation*}
S_B:=\sum_{i\in I}\{0,b_i\}\subset \Z\,\text{(respectively, $\subset \Q$)},
\end{equation*}
and we refer to it as the sum over the sequence $B.$
\end{definition}

\begin{proposition}\label{prop:clave-aritmetica}
Let $d_1,\ldots,d_n$ be pairwise distinct non-zero integers. For every positive integer $m\geq 1$, there exist finite sequences  $B(i)$, $i=0,\ldots, n$,  of non-zero integers, such that
 \begin{equation*}
 \{0, d_1,\ldots, d_n\}=\bigcap_{i=0}^n  S_{B(i)},
 \end{equation*}
and such that every element in $B(i)$ can be written as a power $\pm k^{m}$ for some positive integer $k$ coprime to $m!$.
\end{proposition}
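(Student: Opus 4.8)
The plan is to realize $\{0,d_1,\dots,d_n\}$ as an intersection of $n+1$ sets $S_{B(i)}$, where $S_{B(0)}$ is a single ``large'' approximation that contains all the $d_j$ and nothing unwanted outside a controlled range, and each of $S_{B(1)},\dots,S_{B(n)}$ is designed to ``cut out'' one spurious element while still retaining all of $\{0,d_1,\dots,d_n\}$. The key elementary observation is that for a sequence $B$ consisting of the single integer $b$ we have $S_B=\{0,b\}$, and more generally if all entries of $B$ are equal to a common value $b$ then $S_B=\{0,b,2b,\dots,rb\}$ where $r$ is the length; taking sums of such blocks, $S_B$ is an ``arithmetic-progression sum''. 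Crucially, all of $0,d_1,\dots,d_n$ must lie in \emph{every} $S_{B(i)}$, so each $B(i)$ must be chosen so that $S_{B(i)}\supseteq\{0,d_1,\dots,d_n\}$; the intersection then removes everything else.

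First I would fix a large integer $N$ (for instance $N>\max_j|d_j|$) and build $B(0)$ so that $S_{B(0)}$ is the full symmetric interval $\{-N,\dots,N\}\cap\Z$, or at least a set containing $\{0,d_1,\dots,d_n\}$ whose only elements are controlled; concretely one can take $B(0)$ to be a multiset of suitable signed powers summing up, through partial sums, to every integer in $[-N,N]$ — a base-type (e.g. binary, after adjusting for the power constraint) representation does this. Then, for each fixed target value $v$ lying in $S_{B(0)}\setminus\{0,d_1,\dots,d_n\}$, I want an auxiliary sequence whose sum-set contains $\{0,d_1,\dots,d_n\}$ but misses $v$. Here is where I would exploit a single large ``offset'' entry: a sequence of the form $\{0,d_1,\dots,d_n\}\subseteq \{0,c_1,\dots\}$ augmented by one very large entry $L$ forces $S_{B(i)}$ to be (small values near $0$) $\sqcup$ (small values near $L$), so by choosing $L$ appropriately one avoids a prescribed $v$ while keeping the small cluster equal to, or containing, $\{0,d_1,\dots,d_n\}$. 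Iterating over the finitely many bad values $v$ — grouping several bad values into one $B(i)$ when possible so that only $n$ auxiliary sequences are needed, which is the delicate counting that matches the statement's ``$i=0,\dots,n$'' — yields $\bigcap_i S_{B(i)}=\{0,d_1,\dots,d_n\}$.

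The remaining point is the arithmetic side condition: every entry of every $B(i)$ must be $\pm k^m$ with $k$ coprime to $m!$. Since the set of $m$-th powers $k^m$ with $\gcd(k,m!)=1$ is, by Dirichlet/CRT, ``dense enough'' in the sense that it contains arbitrarily large elements in every residue class coprime to any fixed modulus, I would first carry out the whole construction above over $\Z$ with \emph{arbitrary} non-zero integer entries, and then, at the very end, rescale: multiply all the $d_j$ and all chosen entries by a common large factor $\lambda$ that is itself an appropriate $m$-th power (or absorb the scaling into the choice of $L$ and the block lengths), and replace each needed integer entry by a genuine $\pm k^m$ of the right size and sign, using that we only ever needed the \emph{partial sums} to hit specified values, which gives us slack to perturb each entry within the allowed set. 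I expect the main obstacle to be precisely this last bookkeeping — simultaneously (a) keeping the number of auxiliary sequences down to $n$, (b) ensuring each $S_{B(i)}$ still contains all of $\{0,d_1,\dots,d_n\}$ after the entries are forced to be prescribed powers, and (c) verifying no new common element sneaks into the intersection — rather than any single conceptual difficulty; the power/coprimality constraint itself is harmless once one works with a flexible scaling parameter.
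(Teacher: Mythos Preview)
Your high-level picture matches the paper's: take $S_{B(0)}$ to be a full interval containing the set, and make each auxiliary $S_{B(j)}$ a union of two clusters separated by a gap. But two steps in your outline are genuine gaps rather than bookkeeping, and both are resolved in the paper by observations you have not made.

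First, the counting. You propose to remove bad values $v\in S_{B(0)}\setminus\{0,d_1,\dots,d_n\}$ one at a time and then ``group several bad values into one $B(i)$'' to bring the number of auxiliary sequences down to $n$, but you give no mechanism for the grouping; as written you would need one $B(i)$ per bad value, and there may be far more than $n$ of those. The paper's resolution is to order the set as $-a_r<\cdots<-a_1<0<e_1<\cdots<e_s$ and, for each $j$, make $S_{B(j)}$ a union of two \emph{intervals} whose gap is exactly the open interval between two consecutive elements (for instance $(e_{j-1},e_j)$, achieved by taking $e_{j-1}$ copies of $1$, $k_j^m-e_j$ copies of $-1$, and one copy of $k_j^m$). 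There are exactly $n$ such consecutive gaps, so $n$ auxiliary sequences suffice, and intersecting with $S_{B(0)}=[-a_r,e_s]\cap\Z$ yields precisely $\{0,d_1,\dots,d_n\}$.

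Second, the power/coprimality constraint. Your rescaling idea does not work: multiplying every entry of $B(i)$ by $\lambda$ replaces $\bigcap_i S_{B(i)}$ by $\lambda\{0,d_1,\dots,d_n\}$, which is the wrong set, and you cannot ``perturb'' entries freely because the sumset must hit the \emph{exact} integers $d_j$. The observation you are missing is that $1=1^m$ with $\gcd(1,m!)=1$, so $\pm 1$ are always admissible entries. The paper builds every $B(j)$ out of many copies of $\pm 1$ together with (for $j\geq 1$) a single large entry $\pm k_j^m$ where $k_j$ is chosen coprime to $m!$; no rescaling or perturbation is required.
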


\begin{proof}
Fix $m \geq 1$. Since the construction of $B(i)$, $i=0, \ldots, n,$ depends on the sign of the pairwise distinct $d_i \in \mathbb Z$, $i=1, \ldots, n$, we  write them as an ordered sequence
$$
\{-a_r<\ldots<-a_1<0<e_1<\ldots <e_s\}
$$
where $n=r+s$. We assume $a_0=0=e_0$.

In the first step,  let $B(0)$ be the sequence consisting of $a_r$ copies of $-1=-1^{m}$ and $e_s$ copies of $1=1^{m}$. Thus $$\{-a_r<\ldots <e_s\}\subset S_{B(0)}=[-a_r,e_s]\cap\Z.$$
In the second step, for $j=1,\ldots, s$, choose a positive $k_j\in\Z$ coprime with $m!$ such that $k_j^m>\max\{e_s,e_j+a_r\}$. Then, let  $B(j)$  be the sequence consisting of $k_j^m-e_j$ copies of $-1=-1^m$, $e_{j-1}$ copies of $1=1^m$, and one copy of $k_j^m$. Hence,
 $$
 \{-a_r<\ldots <e_s\}\subset S_{B(j)}=\big([-(k_j^m-e_j),e_{j-1}]\cup [e_j, k_j^m+e_{j-1}]\big)\cap\Z.
 $$
Finally,  for $j=s+1,\ldots, n$, choose a positive $k_j\in\Z$ coprime with $m!$ such that $k_j^m>\max\{a_r, \, a_{j-s}+e_s\}$. Then, let  $B(j)$  be the sequence consisting of $k_j^m-a_{j-s}$ copies of $1=1^m$,  $a_{j-s-1}$ copies of $-1=-1^m$, and one copy of  $-k_j^m$.  Hence,
$$
\{-a_r<\ldots <e_s\}\subset S_{B(j)}=\big([-k_j^m-a_{j-s-1},-a_{j-s}]\cup [-a_{j-s-1}, k_j^m-a_{j-s-1}]\big)\cap\Z.
$$

 All of the above implies that
 $$
 \{0, d_1,\ldots, d_n\}=\{-a_r<\ldots <e_s\}=\bigcap_{i=0}^n  S_{B(i)}.
 $$
\end{proof}

For $A\subset \Q$ and $\lambda\in\Q$ we write
 $$
 \lambda A:=\{\lambda a\,|\, a\in A\}.
  $$
Notice that if $B(i)$ is a finite sequence of not necessarily pairwise distinct non-zero rational numbers, $i=0,\ldots,n$,  for every  $\lambda\in\Q$, we have that
 $$
 \lambda\big(\bigcap_{i=0}^n  S_{B(i)}\big)=\bigcap_{i=0}^n  S_{\lambda B(i)}.
 $$
 Therefore, the following is a direct consequence of Proposition \ref{prop:clave-aritmetica}:

\begin{corollary}\label{cor:clave-aritmetica-racional}
Let $d_1,d_2,\ldots,d_n$ be pairwise distinct non-zero rational numbers. Then, there exist finite sequences  $B(i)$, $i=0,\ldots, n$,  of non-zero rational numbers, such that
 \begin{equation*}
 \{0, d_1,\ldots, d_n\}=\bigcap_{i=0}^n  S_{B(i)}.
 \end{equation*}
\end{corollary}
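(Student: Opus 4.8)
The plan is to deduce this from the integral case in Proposition \ref{prop:clave-aritmetica} by clearing denominators. First I would pick a positive integer $N$ that is a common denominator of $d_1,\ldots,d_n$, so that $Nd_j\in\Z$ for every $j=1,\ldots,n$; concretely $N$ can be taken to be the product of the denominators of the $d_j$ in lowest terms. Since multiplication by the non-zero rational $N$ is an injection of $\Q$ sending $0$ to $0$, the numbers $Nd_1,\ldots,Nd_n$ are again pairwise distinct non-zero integers.

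Next I would apply Proposition \ref{prop:clave-aritmetica} to the integers $Nd_1,\ldots,Nd_n$ (taking $m=1$, say, since no coprimality condition is needed here), obtaining finite sequences $B'(i)$, $i=0,\ldots,n$, of not necessarily pairwise distinct non-zero integers with
\[
\{0,Nd_1,\ldots,Nd_n\}=\bigcap_{i=0}^n S_{B'(i)}.
\]
Finally I would rescale back by $\lambda:=1/N\in\Q$, using the two identities recorded immediately before the statement of this corollary: first $\lambda\{0,Nd_1,\ldots,Nd_n\}=\{0,d_1,\ldots,d_n\}$, and then $\lambda\big(\bigcap_{i=0}^n S_{B'(i)}\big)=\bigcap_{i=0}^n S_{\lambda B'(i)}$. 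Combining these yields
\[
\{0,d_1,\ldots,d_n\}=\lambda\Big(\bigcap_{i=0}^n S_{B'(i)}\Big)=\bigcap_{i=0}^n S_{\lambda B'(i)},
\]
so the sequences $B(i):=\lambda B'(i)=\tfrac1N B'(i)$, which are finite sequences of not necessarily pairwise distinct non-zero rational numbers, prove the corollary.

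There is essentially no obstacle: the only points to verify are the elementary facts that multiplication by a non-zero rational is a bijection of $\Q$ (hence commutes with finite intersections and preserves the property of being pairwise distinct and non-zero) and that $\lambda S_B=S_{\lambda B}$, which follows from the distributivity $\lambda(X+Y)=\lambda X+\lambda Y$ of scaling over Minkowski sums together with $\lambda\{0,b\}=\{0,\lambda b\}$ — exactly the identities made explicit in the excerpt just above. Alternatively, one could bypass Proposition \ref{prop:clave-aritmetica} entirely and simply rerun its proof verbatim with each $d_j$ replaced by $Nd_j$ and then divide the resulting sequences by $N$; the argument is identical.
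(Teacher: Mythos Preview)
Your proof is correct and follows exactly the paper's approach: the paper states the scaling identity $\lambda\big(\bigcap_{i=0}^n S_{B(i)}\big)=\bigcap_{i=0}^n S_{\lambda B(i)}$ and then declares the corollary a direct consequence of Proposition~\ref{prop:clave-aritmetica}, which amounts precisely to clearing denominators, applying the integral proposition, and rescaling back.
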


\section{Circle bundles over inflexible $2$-manifolds: mapping degree set}\label{sec:degree_connected}

This section is devoted to prove Theorem  \ref{thm:main-3-manifolds}. As explained in the introduction (see Remark \ref{rem:inflexible}) if we want to realize a finite set of integers, strictly containing $0$, as  a mapping degree set $D(M, N)$, then  both $M$ and $N$ need to be inflexible manifolds. We are going to consider circle bundles  over certain inflexible $2$-manifolds, with prescribed Euler class, whose total space is again an inflexible $3$-manifold. These $3$-manifolds will be used as building blocks to construct, by means of iterated connected sums, manifolds $M$ and $N$.

We first collect a couple of results that are needed. The first one is immediately  obtained by iterating \cite[Lemma 7.8]{CMuV1}, \cite[Lemma 3.5]{NWW}: 
\begin{lemma}\label{lem:lemas_CL+NWW}
Let $M_i, \, i=1, \ldots, k, $ and $N$ be oriented closed connected $n$-manifolds. Then
$$
\sum\limits_{i=1}^k D(M_i,N) \subset D(\operatornamewithlimits{\#}_{i=1}^k M_i,N)
$$
 Moreover, if  $\pi_{n-1}(N)=0$, then
$$
\sum\limits_{i=1}^k D(M_i,N) =D(\operatornamewithlimits{\#}_{i=1}^k M_i,N)
$$
\end{lemma}

Reformulating and iterating \cite[Lemma 4.3]{NWW}, we get the following:
\begin{lemma}\label{lem:lema_NWW}
Let $M$ and $N_i,\, i=1, \ldots, k,$ be oriented closed connected manifolds of the same dimension. Then
$$D(M,\operatornamewithlimits{\#}_{i=1}^k N_i)\subset \operatornamewithlimits{\cap}_{i=1}^k D(M,N_i).$$
\end{lemma}

Whenever we need to enlarge the fundamental group of our manifolds to construct mapping degree sets we shall use handle-bodies:

\begin{definition}
    Given integers $n,k>0$, we denote by $H(n,k)$ the oriented closed connected $n$-manifold arising from the $k$-fold connected sum of oriented handle bodies $S^{n-1}\times S^1$, that is
    $$H(n,k):={\operatornamewithlimits{\#}_{i=1}^k} (S^{n-1}\times S^1).$$
\end{definition}

\begin{lemma}\label{lem:fix_3.3}
    Let $n>2$ and $N$ be an oriented closed connected $n$-manifold such that $\pi_{n-1}(N)=0$. Then $D\big(H(n,k),N\big)=\{0\},$ for every integer $k>0.$      
\end{lemma}
\begin{proof}
Given a space $X$, and a positive integer $m$, let $X^{(m)}$ and $X\langle m\rangle$ respectively denote the $m$th Postnikov stage and the $m$-connected cover of $X$.

Since Postnikov stages and connected covers can be constructed in a functorial way \cite[Example 1.A.1.1]{Dror}, given $f\colon H(n,1)\to N$ we can take the $(n-2)$th Postnikov stage and the $(n-2)$-connected cover to obtain the following commutative diagram
\begin{equation}\label{eq:fix_3.3}
\begin{tikzcd}
\makebox[0pt][r]{$S^{n-1}\simeq\,$}H(n,1)\langle n-2\rangle \arrow[r] \arrow[d, "f\langle n-2\rangle"] & H(n,1) \arrow[r] \arrow[d, "f"] & H(n,1)^{(n-2)}\makebox[0pt][l]{$\,\simeq S^1$} \arrow[d, "f^{(n-2)}"]\\
N\langle n-2\rangle \arrow[r] & N \arrow[r]  & N^{(n-2)}.
\end{tikzcd}   
\end{equation}
We now compare the homological Serre spectral sequences (Sss) associated to the two fiber sequences above. On the one hand, the Sss associated to the top-row fiber sequence in Equation \eqref{eq:fix_3.3} collapses at the $E^2_{*,*}$-page and the fundamental class $[H(n,1)]\in H_n\big(H(n,1)\big)$ is represented by $[S^{n-1}]\otimes [S^1]$, where $[M]$ now denotes  the \emph{homological} fundamental class of $M$. Therefore, using the edge morphisms, we get that the class $H_n(f)\big([H(n,1)]\big)\in H_n(N)$ is represented by $H_{n-1}(f\langle n-2\rangle)\big([S^{n-1}]\big)\otimes H_1(f^{(n-2)})\big([S^1]\big)$ in the Sss associated to the bottom-row fiber sequence in Equation \eqref{eq:fix_3.3}.

On the other hand, since $\pi_{n-1}(N)=0,$ then $N\langle n-2\rangle$ is indeed $(n-1)$-connected and therefore $H_{n-1}(N\langle n-2\rangle)=0.$  Thus $H_{n-1}(f\langle n-2\rangle)$ is the trivial morphism,  and the class $H_{n-1}(f\langle n-2\rangle)\big([S^{n-1}]\big)\otimes H_1(f^{(n-2)})\big([S^1]\big)$ is trivial. In other words $D\big(H(n,1),N\big)=\{0\}$.

Finally, for the general case, we apply Lemma \ref{lem:lemas_CL+NWW}, to $H(n,k)={\operatornamewithlimits{\#}_{i=1}^k}H(n,1)$ and $N$ to get that
    $$D\big(H(n,k),N\big)=\sum\limits_{i=1}^k D\big(H(n,1),N\big)=\{0\}.$$
\end{proof}

A key result in our arguments is the following:

\begin{lemma}\label{lem:3}
Let $M_i$ and $N_i$, $i=1,\ldots, s,$ be oriented closed connected $n$-manifolds, $n>2$, and let $A\subset \operatornamewithlimits{\cap}_{i=1}^s D(M_i,N_i)$ be a finite set containing $0$. Then there exists an integer $\ell\geq 0$ such that
$$A\subset D\big((\operatornamewithlimits{\#}_{j=1}^s M_j)\# H(n,\ell), \operatornamewithlimits{\#}_{i=1}^s N_i\big).$$
\end{lemma}

\begin{proof}
Since $H(n,\ell_1)\# H(n,\ell_2)=H(n,\ell_1+\ell_2),$ and $$A\subset \bigcap_{i=1}^s D(M_i,N_i)=\big(\bigcap_{i=1}^{s-1} D(M_i,N_i)\big)\cap D(M_r,N_r),$$ an easy inductive argument reduces the proof to the case $s=2$.

Let us define the integers 
$$\{0,d_1,\ldots, d_r\}:=A\subset D(M_1,N_1)\cap D(M_2,N_2),$$
where $d_1, \ldots, d_r$ are pairwise distinct and non-zero.
Since every continuous map from $ M_i$ to  $N_i$, $i=1,2,$ is homotopic to a smooth map, for each 
$$d_j\in D(M_1,N_1)\cap D(M_2,N_2)\subset D(M_i,N_i),  \;  i =1,2$$ 
we can choose  $f_{i,j}\colon M_i\to N_i$ such that $f_{i,j}$ is smooth, and $\deg(f_{i,j})=d_j.$ Then, since the set of singular values of a smooth map has measure zero according to Sard's Theorem \cite{Sard},   we can choose a regular value $y_{i,j}\in N_i$ for $f_{i,j}$ such that $y_{i,j}\ne y_{i,j'},$ and $f_{i,j}^{-1}(y_{i,j})\cap f_{i,j'}^{-1}(y_{i,j'})=\emptyset,$ for $j\ne j'.$  Let us set $$f_{i,j}^{-1}(y_{i,j})=:\{x_{i,j,1},\ldots, x_{i,j,s_{i,j}}\},$$ where $s_{i,j}\geq |d_j|.$

Now, by the Stack of Records Theorem \cite[Theorem 9.1]{SoRT}, \cite[p.\ 26]{Guillemin-Pollack}, there exists an $n$-dimensional open disc $U_{i,j}\subset N_i$ such that $y_{i,j}\in U_{i,j}$ and $$f_{i,j}^{-1}(U_{i,j})=\bigcup\limits_{k=1}^{s_{i,j}} {V_{i,j,k}}$$ where $x_{i,j,k}\in V_{i,j,k}\subset M_i$, ${V_{i,j,k}}\cap {V_{i,j,k'}}=\emptyset$ for $k\ne k',$  and $f_{i,j}|_{V_{i,j,k}}\, \colon {V_{i,j,k}}\to {U_{i,j}}$ is a diffeomorphism. Therefore $V_{i,j,k}\subset M_i$ is an open disc, $\deg(f_{i,j}|_{V_{i,j,k}})=\pm 1$, and
$$\deg(f_{i,j})=\sum_{k=1}^{s_{i,j}} \deg(f_{i,j}|_{V_{i,j,k}}).$$
We may assume that $\deg(f_{i,j}|_{V_{i,j,k}})=\operatorname{sign}(\deg(f_{i,j})),$ for $k=1,\ldots, |d_j|$ while the following $s_{i,j}-|d_j|$ diffeomorphisms, which are on even number, have alternating signs.

Since we are dealing with compact Hausdorff spaces, we can shrink all the discs and assume that the closures $\overline{U_{i,j}}\subset N_i$, $j=1,\ldots, r,$ are pairwise disjoint, as well as the closures $\overline{V_{i,j,k}}\subset M_i$, $k=1,\ldots s_{i,j},$
and $f_{i,j}|_{\overline{V_{i,j,k}}}\, \colon{\overline{V_{i,j,k}}}\to \overline{U_{i,j}}$ are diffeomorphisms. We fix diffeomorphisms $g_j\colon \partial U_{1,j}\to \partial U_{2,j}$ for $j=1,\ldots, r.$

Let $M$ be the $n$-manifold obtained by removing discs and adding handles $S^{n-1}\times [0,1]$ to the disjoint union of $M_1$ and $M_2$ as follows:
\begin{enumerate}
    \item First, we remove all discs $V_{i,j,k}\subset M_i,$ $i=1,2,$ from the disjoint union of $M_1$ and $M_2$. Let $K_0$ denote the disconnected $n$-manifold with boundary that we obtain;
    \item Then, for each $k=1,\ldots, |d_j|$, we add to $K_0$ a handle, that we label $(d_j,k,0)$, by identifying $S^{n-1}\times\{0\}$ with $\partial(V_{1,j,k})$ and $S^{n-1}\times\{1\}$ with $\partial(V_{2,j,k})$ such that if $(x,t)\in S^{n-1}\times I$ is in the handle $(d_j,k,0)$, then $g_j\big(f_{1,j}(x,0)\big)=f_{2,j}(x,1).$ Thus we obtain $K_1$, a connected $n$-manifold, with boundary when $s_{i,j} > |d_j|$; 
    \item Finally, for each $i=1,2$, $j=1,\ldots, r$, if $s_{i,j} > |d_j|$, for $m=1,\ldots, \frac{s_{i,j}-|d_j|}{2}$, we add to $K_1$ a handle, that we label $(d_j,m,i)$, identifying $S^{n-1}\times\{0\}$ with $\partial(V_{i,j,|d_j|+2m-1})$ and $S^{n-1}\times\{1\}$ with $\partial(V_{i,j,|d_j|+2m})$, such that if $(x,t)\in S^{n-1}\times I$ is in the handle $(d_j,m,i)$, then $f_{i,j}(x,0)=f_{i,j}(x,1).$ 
\end{enumerate}
Let $M$ be the resulting manifold. Observe that $M=M_1\# M_2\# H(n,\ell)$ for some $\ell\geq 0.$ We are going to prove that $$\{0,d_1,\ldots, d_r\}\subset D(M,N_1\#N_2).$$

Indeed, given any $d_j$, for $j=1,\ldots, r,$ we construct a map $F_j\colon M\to N_1\#N_2$ of degree $d_j$ as follows:
\begin{enumerate}
    \item We apply the pinching-stretching-collapsing procedure (see Figure \ref{fig:pinching-stretching-collapsing}) on all handles within $M$ labelled as $(d_{j'},k,0)$ and $(d_{j'},m,i)$ for $j'\ne j$. Consequently, we obtain a map  $M \to \widetilde{M}_j$ where $\widetilde{M}_j$ can be identified as the connected manifold derived from the disjoint union of $M_1$ and $M_2$ where discs $V_{i,j,k}\subset M_i,$ $i=1,2,$ are removed and handles with labels $(d_j,k,0)$ and $(d_j,m,i)$ are added as above.

    \item Let $N_1\#N_2$ be the manifold obtained by first removing the discs $U_{i,j}\subset N_i$, $i=1,2$ and then by identifying $\partial(U_{1,j})$ with $\partial(U_{2,j})$ using the previously introduced diffeomorphism $g_j$.
Then, define $\widetilde{M}_j\to N_1\# N_2$ by:
    \begin{itemize}
        \item  $(x,t)\in S^{n-1}\times I$ in the handle $(d_j,k,0)$ is mapped onto  $g_j\big(f_{1,j}(x,0))=f_{2,j}(x,1),$

        \item $(x,t)\in S^{n-1}\times I$ in the handle $(d_j,m,i)$ is mapped onto $f_{i,j}(x,0)=f_{i,j}(x,1),$ 

        \item a point in $M_i$ is mapped by $f_{i,j}$ for $i=1,2$. 
    \end{itemize}

    \item Let $F_j$ be the composition of the maps $M\to\widetilde{M}_j$ and $\widetilde{M}_j\to N_1\# N_2$ constructed above. Then, $\deg(F_j) = d_j$. 
\end{enumerate}


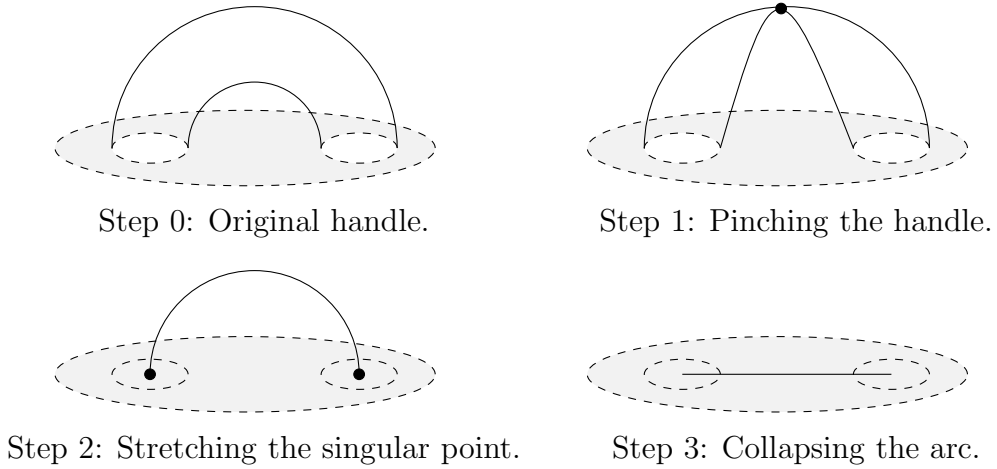
\begin{figure}
\centering
\begin{tikzpicture}
\filldraw[color=black,fill=black!5, dashed] (2.25,0) ellipse (2.5 and 0.5);
\filldraw[color=black,fill=white, dashed] (1,0) ellipse (0.5 and 0.2);
\filldraw[color=black,fill=white, dashed] (3.75,0) ellipse (0.5 and 0.2);
\draw (4.25,0) arc (0:180:1.875);
\draw (3.25,0) arc (0:180:0.875);
\node at (2.5,-1) {Step 0: Original handle.};

\begin{scope}[xshift=7cm]
\filldraw[color=black,fill=black!5, dashed] (2.25,0) ellipse (2.5 and 0.5);
\filldraw[color=black,fill=white, dashed] (1,0) ellipse (0.5 and 0.2);
\filldraw[color=black,fill=white, dashed] (3.75,0) ellipse (0.5 and 0.2);
\draw (4.25,0) arc (0:180:1.875);
\draw (1.5,0) .. controls (2.20,2.45) and (2.30,2.45)  .. (3.25,0);
\filldraw[black] (2.3,1.85) circle (2pt);
\node at (2.5,-1) {Step 1: Pinching the  handle.};
\end{scope}

\begin{scope}[xshift=0cm,yshift=-3cm]
\filldraw[color=black,fill=black!5, dashed] (2.25,0) ellipse (2.5 and 0.5);
\filldraw[color=black,fill=black!5, dashed] (1,0) ellipse (0.5 and 0.2);
\filldraw[color=black,fill=black!5, dashed] (3.75,0) ellipse (0.5 and 0.2);
\draw (3.75,0) arc (0:180:1.375);
\filldraw[black] (3.75,0) circle (2pt);
\filldraw[black] (1,0) circle (2pt);
\node at (2.5,-1) {Step 2: Stretching the singular point.};
\end{scope}

\begin{scope}[xshift=7cm,yshift=-3cm]
\filldraw[color=black,fill=black!5, dashed] (2.25,0) ellipse (2.5 and 0.5);
\filldraw[color=black,fill=black!5, dashed] (1,0) ellipse (0.5 and 0.2);
\filldraw[color=black,fill=black!5, dashed] (3.75,0) ellipse (0.5 and 0.2);
\draw (3.75,0) -- (1,0);
\node at (2.5,-1) {Step 3: Collapsing the arc.};
\end{scope}
\end{tikzpicture}
\caption{A graphical description of the pinching-stretching-collapsing procedure}
    \label{fig:pinching-stretching-collapsing}
\end{figure}


Therefore 
$$\{0,d_1,\ldots, d_r\}=A\subset D(M, N_1 {\#} N_2).$$
\end{proof}

\begin{remark}
The conclusion of Lemma \ref{lem:3} does not hold if  the connected sum with the handle bodies is omitted from the source manifold. Notice that if $f\colon M\to N$ is a non-zero degree map, then $\pi(f)\big(\pi_1(M)\big)$ has finite index in $\pi_1(N)$ \cite[Tatsache (B), p.\ 86]{Hopf-pi1}. 
Therefore, maps $f_i\colon M_i\to N_i$  where $\pi_1(f_i)$ is not surjective, $i=1,2$, cannot lead to a non-zero degree map $f_1\# f_2\colon M_1\#M_2\to N_1\# N_2$ because $\pi(f_1\# f_2)\big(\pi_1(M_1\# M_2)\big)=\pi(f_1)\big(\pi_1(M_1)\big)\ast \pi(f_2)\big(\pi_1(M_2)\big)$ has not finite index in $\pi_1(N_1\# N_2)=\pi_1(N_1)\ast\pi_1(N_2)$.  To illustrate this argument,  let us consider  the manifolds $M=S^n$ and $N=\R\mathrm{P}^n$, where $n>2$. While there exist non-zero degree maps from $S^n$ to $\R\mathrm{P}^n$,  any map from $S^n\# S^n$ to $\R\mathrm{P}^n\# \R\mathrm{P}^n$ 
must be of degree zero. This arises from the fact that   $S^n\# S^n= S^n$ is simply connected while $\pi_1(\R\mathrm{P}^n\# \R\mathrm{P}^n)=\Z/2\ast\Z/2$ is infinite.
\end{remark}

Combining Lemmas \ref{lem:lemas_CL+NWW}, \ref{lem:lema_NWW}, \ref{lem:fix_3.3}, and \ref{lem:3}, we prove the following result:

\begin{proposition}\label{prop:iteratedsums}
Let $M_i, N_i, \, i=1, \ldots, r$, be oriented closed connected $n$-manifolds,
$n>2$, satisfying:
\begin{enumerate}
    \item $\pi_{n-1}(N_i)=0$ for $i=1,\ldots, r$; 
    \item $D(M_i,N_j)=\{0\}$ for $i\ne j$; 
    \item $D(M_i,N_i)\cap D(M_j,N_j)$ is finite for $i\ne j$.
\end{enumerate}
Then there exists an integer $\ell\geq 0$ such that
$$D\big((\operatornamewithlimits{\#}_{j=1}^r M_j)\# H(n,\ell), \operatornamewithlimits{\#}_{i=1}^r N_i\big)=  \bigcap_{i=1}^r D(M_i,N_i).
$$
\end{proposition}
\begin{proof}
By hypothesis, $\bigcap_{i=1}^rD(M_i,N_i)$ is a finite set containing $0$, and by Lemma \ref{lem:3}, there exists an integer $\ell\geq 0$ such that
$$\bigcap_{i=1}^rD(M_i,N_i)\subset D\big((\operatornamewithlimits{\#}_{j=1}^r M_j)\# H(n,\ell), \operatornamewithlimits{\#}_{i=1}^r N_i\big).$$     
Conversely, to show that 
$$D\big((\operatornamewithlimits{\#}_{j=1}^r M_j)\# H(n,\ell), \operatornamewithlimits{\#}_{i=1}^r N_i\big)\subset \bigcap_{i=1}^rD(M_i,N_i),$$
we first observe that
according to Lemma \ref{lem:lemas_CL+NWW},
$$D\big((\operatornamewithlimits{\#}_{j=1}^r M_j)\# H(n,\ell), N_i\big)=D(M_1,N_i)+\ldots+D(M_r,N_i)+D\big(H(n,\ell),N_i\big).$$
Thus by our hypothesis and Lemma \ref{lem:fix_3.3}, we get 
$$D\big((\operatornamewithlimits{\#}_{j=1}^r M_j)\# H(n,\ell), N_i\big)=D(M_i,N_i).$$ 
Therefore, applying Lemma \ref{lem:lema_NWW} we finally obtain: 
$$D\big((\operatornamewithlimits{\#}_{j=1}^r M_j)\# H(n,\ell), \operatornamewithlimits{\#}_{i=1}^r N_i\big)\subset \bigcap_{i=1}^r D\big((\operatornamewithlimits{\#}_{j=1}^r M_j)\# H(n,\ell),N_i\big) = \bigcap_{i=1}^rD(M_i,N_i).$$
\end{proof}

We now have all the ingredients to prove our main theorem.

\subsection*{Proof of Theorem \ref{thm:main-3-manifolds}}\label{sec:proofmainthm} Let $A=\{0, d_1,\ldots, d_n\}$  be a finite set of pairwise distinct  integers. We need to show that $A$ is realized by two oriented closed connected  $3$-manifolds $M, N$ in the sense that $A=D(M,N)$.

For this purpose, we consider an oriented closed connected hyperbolic surface of genus $g >1$, $\Sigma_g$. Then, for every $i \in \mathbb Z$,  let $K_i$ be the total space in the circle bundle
 $$
 S^1\to K_i \to \Sigma_g
 $$
with Euler number $e(K_i)=i$. Observe that $K_i, i \in \mathbb Z,$ is an aspherical $3$-manifold. The mapping degree set between these $3$-manifolds is fully described in {\cite[Lemma 3.4]{NWW}}:
\begin{equation}\label{eq:degree3man}
 D(K_i,K_j)=\left\{ \begin{array}{ll} \{0,j/i\}, \qquad & \text{if } i|j, \\ \{0\}, & \text{if } i{\not|} j. \end{array}\right.
\end{equation}

According to Proposition \ref{prop:clave-aritmetica}, for every positive integer $m>0$ that we fix, there exist finite sequences,  $B(i)$, $i=0,\ldots, n$, of not necessarily pairwise distinct non-zero integers,  satisfying that
\begin{equation*}
A=\bigcap_{i=0}^n  S_{B(i)}.
\end{equation*}

Now, we choose particular pairwise distinct primes $q_0,q_1,\ldots, q_n$  fulfilling the condition
 $$
 q_i>\max\{|b|\,\, |\, b\in B(i)\}, \, i =0, \ldots, n,
 $$
 and  we denote
 $$
 \alpha_i=q_i\prod\limits_{b\in B(i)}b,  \, i =0, \ldots, n.
 $$
 Then, we construct the following ``intermediate''  manifolds (that will serve us to realize each of the sums $S_{B(i)}$), for $i=0,\ldots,n$:
 \begin{align*}
M_i &= \operatornamewithlimits{\#}_{b\in B(i)} K_{\alpha_i/b} \\
N_i & = K_{\alpha_i}.
\end{align*}

Because $K_{\alpha_i}$ are aspherical $3$-manifolds, for $i=0, \ldots, n,$  we have that $\pi_2(K_{\alpha_i}) =0$, and conditions to apply Lemma  \ref{lem:lemas_CL+NWW}  hold. Therefore:
$$
D(M_i,N_j) = D(  \operatornamewithlimits{\#}_{b\in B(i)} K_{\alpha_i/b}, K_{\alpha_{j}} ) = \operatornamewithlimits{\sum}_{b\in B(i)} D(K_{\alpha_i/b}, K_{\alpha_{j}} ).
$$
Using \eqref{eq:degree3man},  we  then get that, for $i=0, \ldots, n,$
\begin{align*}
D(M_i,N_i)& =S_{B(i)}\, ,\,\text{and}\\
D(M_i,N_j)&=\{0\},\,\text{for $i\neq j$.}
\end{align*}

Since all the conditions to apply Proposition \ref{prop:iteratedsums} plainly hold for the manifolds $M_i$ and $N_i$, there exists an integer $\ell\geq 0$ such that, for
 \begin{align*}
 {M}&= M_0\# M_1\# \ldots \# M_n\# H(3,\ell), \\
  N&= N_0\# N_1\# \ldots \# N_n,
  \end{align*}
we have that
$$
 D(M,N)=\bigcap_{i=0}^n  S_{B(i)} = A,
$$
and the proof of Theorem \ref{thm:main-3-manifolds} is complete.

\hfill{$\square$}

\begin{remark}\label{rem:MNinflexible}
We claim  that the $3$-manifolds $K_i$, $M_i$, $N_i$, $\operatornamewithlimits{\#}_{i=0}^n{M_i}$, and $N$ involved in the previous proof are inflexible (see also Remark \ref{rem:inflexible}). It is clear, by \eqref{eq:degree3man}, that  $K_i, \, i \in \mathbb Z$, are inflexible.  Now, proceeding along the lines of the proof of Theorem  \ref{thm:main-3-manifolds}, we apply repeatedly Lemma \ref{lem:lema_NWW} and Lemma \ref{lem:lemas_CL+NWW} to get the inflexibility property. On the one hand, we obtain that $D(M_i, M_j) = \{0\}$ for $i \neq j$, and on the other hand
$$
D(\operatornamewithlimits{\#}_{i=0}^n{M_i}, \operatornamewithlimits{\#}_{i=0}^n{M_i}) \subset \bigcap_{i=0}^n  D( M_i, M_i).
$$
 Also, by Lemma \ref{lem:lema_NWW},
$$
D(M_i, M_i) = D(M_i, \operatornamewithlimits{\#}_{b\in B(i)} K_{\alpha_i/b}) \subset \bigcap_{b\in B(i)} D (M_i,  K_{\alpha_i/b})
$$
and using Lemma \ref{lem:lemas_CL+NWW},
$$
D(M_i, K_{\alpha_i/b}) = D(  \operatornamewithlimits{\#}_{b'\in B(i)} K_{\alpha_i/b'}, K_{\alpha_i/b} ) = \operatornamewithlimits{\sum}_{b'\in B(i)} D(K_{\alpha_i/b'}, K_{\alpha_i/b} ).
$$
Now, by Equation \eqref{eq:degree3man}, $D(K_{\alpha_i/b'}, K_{\alpha_i/b} )$ is either $\{0\}$ or $\{0, b'/b\}$ whenever $b | b'$. Hence, $D(M_i, K_{\alpha_i/b}) $ is bounded, and  so   $D(M_i, M_i) $ and $D(\operatornamewithlimits{\#}_{i=0}^n{M_i},\operatornamewithlimits{\#}_{i=0}^n{M_i})$ are bounded. Hence $M_i$, $i=1, \ldots, n$ and  $\operatornamewithlimits{\#}_{i=0}^n{M_i}$ are  inflexible manifolds.  The same arguments work for $N_i$ and $N$, and we conclude.
\end{remark}

\begin{remark}\label{rem:notunique}
The $3$-manifolds involved in Theorem \ref{thm:main-3-manifolds} are not unique, even from a homotopical point of view. Consider any $3$-manifold $P$ with a finite fundamental group $\pi_1(P)$ (for example the Poincar\'e sphere $S^3/\mathrm{SL}(2,5)$, or any lens space $L(p,q)$). Let $K_i, K_j$ be the building blocks in the proof of Theorem \ref{thm:main-3-manifolds}. According to  Lemma  \ref{lem:lemas_CL+NWW}, we have:
 $$
 D(P\#K_i,K_j)=D(P,K_j)+D(K_i,K_j)=\{0\}+D(K_i,K_j)=D(K_i,K_j),
 $$
since $K_j$ is aspherical, and $\pi_1(K_j)$ torsion free while $\pi_1(P)$ is finite. Therefore, in the construction of $M$,  we can replace any $K_i$  by $P\#K_i$ to yield a new $3$-manifold $M'$. This new manifold has torsion in its fundamental group $\pi_1(M')$ unlike $\pi_1(M)$ which was torsion free, and verifies $D(M,N)=D(M',N)$.
\end{remark}

\section{Spherical fibrations over inflexible Sullivan models: \\rational mapping degree set}\label{sec:rational-mappingdegree}

In this section we prove Theorem \ref{thm:main-rational}, which can be thought of as the rational version of Theorem \ref{thm:main-3-manifolds}. Rational homotopy theory provides an equivalence of categories between the category of simply connected  rational spaces and the category of certain differential graded algebras, the so-called Sullivan minimal models.  We refer to \cite{FHT} for basics facts in rational homotopy theory.

More concretely, if $V$ is a graded rational vector space, we write $\Lambda V$ for the free commutative graded algebra on $V$.  A Sullivan algebra $(\Lambda V, \partial)$ is a commutative differential graded algebra (cdga for short) which is free as commutative graded algebra on a simply connected graded vector space $V$ of finite dimension in each degree, and such that $V$ admits a basis  $x_\alpha$ indexed by a well-ordered set with $\partial (x_\alpha)  \in \Lambda (x_\beta)_{\beta <\alpha}$. It is minimal if in addition $\partial(V) \subset \Lambda^{\geq 2}V$. A Sullivan minimal model of a cdga \((A, d)\) is a Sullivan minimal cdga \((\Lambda V, d)\) that is quasi-isomorphic to \((A, d)\).

Recall that, to each space \(X\), Sullivan associated a cdga of forms with rational coefficients,  $A_{PL}(X)$, whose cohomology is isomorphic to the cohomology of $X$ with rational coefficients:
\[
H^*(A_{PL}(X)) \cong H^*(X; \mathbb{Q})
\]
The  Sullivan minimal model of  the cdga $A_{PL}(X)$ is called the minimal model of \(X\) that, in this paper, we denote by $A_X$.  In the case of oriented closed simply connected manifolds $M$, the cohomology of the associated minimal model $A_M$ is a Poincar\'e duality algebra. In particular $A_M$
 has a cohomological fundamental class $[A_M]\in H^{\ast} (A_M) \cong H^{\ast}(M; \mathbb Q)$ which corresponds to the rational cohomological fundamental class $[M]_\Q$ of $M$.

\emph{Ellipticity} for a Sullivan minimal model $(\Lambda V, \partial)$ means that both $V$  and $H^\ast (\Lambda V)$ are finite-dimensional.
Hence, the cohomology is  a Poincar\'e duality algebra \cite{Hal} and one can easily compute the degree of its fundamental cohomological  class \cite[Theorem 32.6]{FHT}.
In particular one can introduce the notion of mapping degree between elliptic Sullivan minimal models and also translate the notion of inflexibility.

Let $(\Lambda V, \partial)$ be an elliptic Sullivan minimal model. Let $\mu \in (\Lambda V)^n$ be a representative of its cohomological fundamental class. Then $(\Lambda V, \partial)$
is \emph{inflexible} if for every cdga-morphism
 $$
 \varphi\colon(\Lambda V, \partial) \to (\Lambda V,\partial),
 $$
we have $\deg(\varphi)=0,\pm 1$, where $H^*(\varphi)([\mu])=\deg(\varphi) [\mu]$.

\subsection{Rational mapping degree set and connected sums} The following results establish, under certain restrictions, the relationship  between  rational mapping degree sets and  connected sums of manifolds.
\begin{lemma}\label{prop_2:lemas_CL+NWW}
Let $M_i$, $i=1,\ldots, r,$ and $N$ be oriented closed connected $n$-manifolds with  $\pi_{n-1}(N_{(0)})=0$. Then
$$
D_\Q(\operatornamewithlimits{\#}_{i=1}^r M_i,N)= \sum_{i=1}^r D_\Q(M_i,N).
$$
\end{lemma}

\begin{proof}
By \cite[Lemma II.2]{CL}, since $\pi_{n-1}(N_{(0)})=0$,  the following  holds:
$$
D(M_1\# M_2,N) \subset D_\Q(M_1,N)+D_\Q(M_2,N)
.$$
However,  a stronger result is demonstrated in the proof of \cite[Lemma II.2]{CL}. Namely,
$$
D_\Q(M_1\# M_2,N) \subset D_\Q(M_1,N)+D_\Q(M_2,N).
$$
A straightforward inductive argument shows that 
$$
D_\Q(\operatornamewithlimits{\#}_{i=1}^r M_i,N) \subset \sum_{i=1}^r D_\Q(M_i,N),
$$
hence, it suffices to prove the other inclusion. To that end, one can apply the same arguments as in \cite[Lemma 7.8]{CMuV1}:
let $q_{(0)}\colon (\#_{i=1}^r M_i)_{(0)}\to \vee_{i=1}^r (M_i)_{(0)}$ denote the rationalization of the pinching map. Then for every given maps $f_i\colon (M_i)_{(0)}\to N_{(0)}$, the composition
$$
(\vee_{i=1}^r f_i)\circ q_{(0)}\colon  (\operatornamewithlimits{\#}_{i=1}^r M_i)_{(0)}\to N_{(0)}
$$
has degree $\sum_{i=1}^r\deg(f_i)$ and the result follows.
\end{proof}

We give a precise definition of connected sum in the world of cdgas:
\begin{definition}\label{def:connected_sum_dga}
Let ${A}_i$, $i=1,2$, be connected cdgas and let $a_i\in A_i$, $i=1,2$,  be elements of the same degree. The connected sum of the pairs $(A_i, [a_i])$, $i=1,2$, is the cdga
 $$
 (A_1, [a_1])\#(A_2, [a_2]) \buildrel\text{\scriptsize def}\over{:=} (A_1\oplus_{\mathbb{Q}}A_2 )/I\, ,
 $$
where $A_1\oplus_{\mathbb{Q}}A_2\buildrel\text{\scriptsize def}\over{:=}(A_1 \oplus A_2)/\mathbb{Q}\{(1,-1)\}$, and
$I\subset A_1\oplus_{\mathbb{Q}}A_2$ is the differential ideal generated by $a_1-a_2$.
\end{definition}

Connected sums of cdgas provide rational models for connected sums of oriented manifolds. Indeed,  for  $M_i, \, i=1, 2$  oriented closed simply connected $n$-manifolds, with Sullivan minimal model $A_{M_i}$,  let $m_i$ be a representative of the cohomological fundamental class  of $A_{M_i}$, for $i=1,2$.  By \cite[Theorem 7.12]{CMuV1}
\begin{equation}\label{eq:modelsum}
(A_{{M}_1}, [m_1])\# ({A_{M}}_2, [m_2])
\end{equation}
is a rational model of $M_1\# M_2$.

We use \eqref{eq:modelsum} above to prove a rational version of Proposition \ref{prop:iteratedsums} that does not involve handle-bodies:
\begin{proposition}\label{prop:3_rational}
Let $M_i, N_i, \, i=1, \ldots, r$, be oriented closed connected $n$-manifolds, such that $\pi_{n-1}(N_j)\otimes\Q=0$, $j=1,\ldots, r$, and $D_\Q(M_i,N_j)=\{0\}$,  $i\neq j$.
Then
 $$
  D_\Q(\operatornamewithlimits{\#}_{j=1}^r M_j, \operatornamewithlimits{\#}_{i=1}^rN_i)= \bigcap_{i=1}^r D_\Q(M_i,N_i).
 $$
\end{proposition}
\begin{proof}
According to Lemma \ref{prop_2:lemas_CL+NWW} and the rational version of Lemma \ref{lem:lema_NWW} (which can be proved following the same arguments as in \cite[Lemma 4.3]{NWW}),   we get that  $$ D_\Q(\operatornamewithlimits{\#}_{j=1}^r M_j, 
\operatornamewithlimits{\#}_{i=1}^rN_i)\subset \bigcap_{i=1}^r D_\Q(M_i,N_i). $$

Conversely,  let  $(A_{M_i}, [m_i] )$ and $(A_{N_i}, [n_i] )$ be Sullivan minimal models of $(M_i, [M_i])$ and $(N_i, [N_i])$ respectively, $i=1,\ldots, r$. For
 $$
 0\ne d\in \bigcap_{i=1}^r D_\Q(M_i,N_i)
 $$ 
 there exists $f_i\colon  A_{N_i}\to A_{M_i}$ with $f_i(n_i)=d\cdot m_i+\alpha_i$ and $\alpha_i$ a coboundary, $i=1,\ldots, r$.
Because $\#_{i=1}^r(A_{M_i}, [m_i+\alpha_i/d])$ and $\#_{i=1}^r(A_{N_i}, [n_i])$ are rational models for $\#_{i=1}^r M_i$ and $\#_{i=1}^rN_i$ respectively, then the morphisms $f_i$ give rise to a well-defined cdga-morphism
$$
\operatornamewithlimits{\#}_{i=1}^r f_i\colon \operatornamewithlimits{\#}_{i=1}^r(A_{N_i}, [n_i])\to \operatornamewithlimits{\#}_{i=1}^r(A_{M_i}, [m_i+\alpha_i/d])
$$
defined by $(\#_{i=1}^r f_i)(x)= f_i(x)$ if  $x\in A_{N_i},$
and whose degree is $\deg(\#_{i=1}^r f_i) = d$.
\end{proof}

\subsection{Inflexible Sullivan minimal models of inflexible manifolds} Following the same strategy as in Section \ref{sec:degree_connected}, we consider spherical fibrations  over certain elliptic and inflexible Sullivan minimal models (Definition \ref{def:K_i-rational}), whose total spaces are Sullivan minimal models of  inflexible manifolds (see Lemma \ref{lem:K_d-elliptic-inflexible}). These manifolds will be the building blocks to construct, by means of iterated connected sums,  manifolds that realize finite sets of rational numbers.

\begin{definition}\label{def:K_i-rational}
Let $(A,\partial)$ be an elliptic, inflexible Sullivan minimal model of formal dimension $2m, \, m \geq 1,$ such that $\pi_j (A)=0$ for $j\geq 2m-1$. Fix $\mu\in A$ a representative of its cohomological fundamental class. Then, for every non-zero $q\in\Q$,  we define the following Sullivan minimal model $$(K_q(A),\partial):= (A\otimes\Lambda(y_{2m-1}),\partial)$$ that extends the differential  of $A$ by $\partial(y_{2m-1})=q \mu.$
\end{definition}

\begin{remark}\label{rem:K_d-is-fibration}
Notice that $({K}_q({A}),\partial)$ is the total space in the rational $S^{2m-1}$-fiber sequence:
\begin{equation*}
(\Lambda(y_{2m-1}),0)\longleftarrow (K_q(A),\partial) \longleftarrow (A,\partial),
\end{equation*}
whose Euler class is $q[\mu]$.
\end{remark}

\begin{lemma}\label{lem:K_d-elliptic-inflexible}
Let $(A,\partial)$ be an elliptic,  inflexible Sullivan minimal model of formal dimension $2m, \, m \geq 1$, such that $\pi_j(A)=0$ for $j\geq 2m-1$. Fix $\mu\in A$ a representative of the fundamental class of $A$, and let $x \in A$ such that $\partial(x)=\mu^2$. Then, for every non-zero $q\in\Q$,  $\big(K_q( A),[y_{2m-1}\mu-qx]\big)$ is the  Sullivan minimal model of an oriented closed connected inflexible $(4m-1)$-manifold $M_{{K}_q}$, with the same connectivity as $( A, \partial)$.
\end{lemma}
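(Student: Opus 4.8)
The plan is to view $(K_q(A),\partial)$ as the model of the rational sphere bundle of Remark~\ref{rem:K_d-is-fibration}, show it is an elliptic Sullivan minimal model of formal dimension $4m-1$ whose cohomological fundamental class is represented by $\omega:=y_{2m-1}\mu-qx$, prove its inflexibility by exploiting that every self-morphism must restrict to the base model $A$, and finally invoke a realization theorem to produce $M_{K_q}$. For the first part, since $A$ is simply connected with $\pi_j(A)=0$ for $j\ge 2m-1$ its generators lie in degrees $2\le j\le 2m-2$, so $\mu\in A^{2m}$ is decomposable; hence $\partial(y_{2m-1})=q\mu$ is decomposable in $K_q(A)=\Lambda\big(V_A\oplus\Q y_{2m-1}\big)$ and $K_q(A)$ is minimal, with a finite-dimensional space of generators. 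Ellipticity and the formal dimension follow from the algebraic Gysin sequence of Remark~\ref{rem:K_d-is-fibration} — equivalently the long exact sequence of $0\to A\to K_q(A)\to K_q(A)/A\to 0$, with $(K_q(A)/A,\bar\partial)$ a shift by $2m-1$ of $(A,\partial)$ — which shows that $H^\ast(K_q(A))$ is finite-dimensional, vanishes above degree $4m-1$, and that the quotient map induces an isomorphism $\rho\colon H^{4m-1}(K_q(A))\xrightarrow{\sim}H^{2m}(A)\cong\Q$; by \cite{Hal}, $K_q(A)$ then satisfies Poincar\'e duality. A Leibniz computation using $\partial\mu=0$ and $\partial x=\mu^2$ gives $\partial\omega=q\mu^2-q\mu^2=0$, and $\rho([\omega])=[\mu]\ne 0$, so $[\omega]$ generates $H^{4m-1}(K_q(A))$ and is the fundamental class (independent of the choice of $x$, since $H^{4m-1}(A)=0$). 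Finally $K_q(A)$ has the same connectivity as $A$, the single new generator sitting in degree $2m-1$, strictly above all generators of $A$.

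The heart of the argument is the inflexibility of $K_q(A)$. Given a cdga-morphism $\varphi\colon K_q(A)\to K_q(A)$, since every generator of $A$ has degree $\le 2m-2$ while every monomial of $K_q(A)$ involving $y_{2m-1}$ has degree $\ge 2m-1$, one has $(K_q(A))^{\le 2m-2}=A^{\le 2m-2}$; hence $\varphi$ sends the generators of $A$ into $A$, so $\varphi(A)\subseteq A$ and $\varphi_A:=\varphi|_A$ is a self-morphism of the \emph{inflexible} model $A$, giving $[\varphi_A(\mu)]=\varepsilon[\mu]$ with $\varepsilon:=\deg(\varphi_A)\in\{0,\pm1\}$. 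Writing $\varphi(y_{2m-1})=a+c\,y_{2m-1}$ with $a\in A^{2m-1}$ and $c\in\Q$, the identity $\partial\varphi(y_{2m-1})=\varphi(q\mu)=q\varphi_A(\mu)$ reads $q\varphi_A(\mu)=\partial a+cq\mu$, whence $[\varphi_A(\mu)]=c[\mu]$ and $c=\varepsilon\in\{0,\pm1\}$. Then $\varphi(\omega)=(a+c\,y_{2m-1})\varphi_A(\mu)-q\varphi_A(x)$ has $y_{2m-1}$-component $c\,\varphi_A(\mu)$, so applying $\rho$ to $[\varphi(\omega)]=\deg(\varphi)[\omega]$ yields $\deg(\varphi)[\mu]=c[\varphi_A(\mu)]=c^2[\mu]$, i.e.\ $\deg(\varphi)=c^2\in\{0,1\}$. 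Thus $K_q(A)$ is inflexible (in fact it admits no self-morphism of degree $-1$).

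It remains to realize $(K_q(A),[\omega])$ by a closed manifold. The hypotheses on $A$ are vacuous when $m=1$, so $4m-1\ge 7$ and $4m-1\equiv 3\pmod 4$; the simply connected elliptic Poincar\'e duality model $(K_q(A),[\omega])$ is then the Sullivan minimal model of a closed smooth oriented $(4m-1)$-manifold $M_{K_q}$ with $[M_{K_q}]_\Q$ corresponding to $[\omega]$, by the realization theorem for rational Poincar\'e duality types in dimensions not divisible by $4$ (Barge; see also \cite{CL}), and the realization may be chosen with the same connectivity as $K_q(A)$, hence as $A$. Since $M_{K_q}$ and $K_q(A)$ share their rational homotopy type, a degree-$d$ self-map of $M_{K_q}$ induces a degree-$d$ self-morphism of $K_q(A)$, so the previous paragraph forces $D(M_{K_q},M_{K_q})\subseteq\{0,\pm1\}$, i.e.\ $M_{K_q}$ is inflexible. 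I expect the main obstacle to be the second paragraph — in particular the clean passage of the fundamental class through $\rho$ and the observation that the degree gap $\deg(y_{2m-1})>\deg V_A$ forces $\varphi(A)\subseteq A$ — together with matching the hypotheses (simple connectivity, Poincar\'e duality, dimension $\equiv 3\bmod 4$, and the connectivity bookkeeping) of the quoted realization theorem; the remaining algebra in the first paragraph is routine.
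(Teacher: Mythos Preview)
Your proof is correct and follows the same strategy as the paper --- ellipticity and the fundamental class via the algebraic Gysin sequence of the sphere fibration, inflexibility by observing that any self-morphism restricts to the base $A$, then Sullivan--Barge realization in dimension $4m-1\equiv 3\pmod 4$ --- except that where the paper simply cites \cite[Proposition~3.1 and Lemma~3.2]{CV2} and \cite[Proposition~A.1]{CMV1}, you carry those arguments out explicitly. Your inflexibility computation in fact yields the slightly sharper conclusion $\deg(\varphi)=c^2\in\{0,1\}$, so $K_q(A)$ (hence $M_{K_q}$) admits no self-morphism of degree $-1$.
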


\begin{proof}
According to \cite[Proposition 3.1]{CV2}, $(K_q( A),\partial)$ is an elliptic Sullivan model of formal dimension $4m-1$ where $y_{2m-1}\mu-qx$ is a representative of its cohomological fundamental class. By \cite[Lemma 3.2]{CV2},  $(K_q ( A), \partial)$ is an inflexible algebra because $( A,\partial)$ is so.  Now, since its formal dimension is $4m-1\equiv 3\mod 4$,  the obstruction theory of Sullivan \cite[Theorem (13.2)]{Sullivan} and
Barge \cite[Th{\'e}or{\`e}me 1]{Barge} guarantees  that $(K_q( A),[y_{2m-1}\mu-qx])$ is the Sullivan minimal model of an oriented closed
simply connected manifold $M_{{K}_q}$. Finally, by \cite[Proposition A.1]{CMV1},
  $M_{K_q}$ and $( A, \partial)$  have the same connectivity.\end{proof}

We compute the rational mapping degree set between the manifolds appearing in the previous lemma:
\begin{lemma}\label{lem:degrees-K_d}
Let $p, q\in \mathbb Q$ be non-zero, and  $M_{K_p}$ and $ M_{K_q}$  be the oriented closed simply connected manifolds from Lemma \ref{lem:K_d-elliptic-inflexible} whose Sullivan minimal models are, respectively,  $({K}_p( A), \partial)$  and $({K}_q( A), \partial)$  from Definition \ref{def:K_i-rational}. Then
$$
D_\Q(M_{K_p}, M_{K_q})=\{0, q/p\}.
$$
\end{lemma}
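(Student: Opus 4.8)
The plan is to compute $D_\Q(M_{K_p}, M_{K_q})$ directly from the Sullivan minimal models, which by construction are $(K_p(A),\partial)$ and $(K_q(A),\partial)$, with fundamental cohomology classes represented by $y_{2m-1}\mu - px$ and $y_{2m-1}\mu - qx$ respectively. So I need to analyze all cdga-morphisms $\varphi\colon (K_q(A),\partial)\to (K_p(A),\partial)$ and determine the possible values of $\deg(\varphi)$, i.e. the rationals $d$ with $\varphi^*[y_{2m-1}\mu-qx] = d\,[y_{2m-1}\mu-px]$ in cohomology. First I would record the easy inclusion: the assignment sending $A$ to $A$ by the identity and $y_{2m-1}\mapsto (q/p)\,y_{2m-1}$ is compatible with the differentials, since $\partial\bigl((q/p)y_{2m-1}\bigr) = (q/p)\,p\mu = q\mu = \partial(y_{2m-1})$ in the target; a short check on the fundamental classes shows this morphism has degree $q/p$, and the constant (zero) morphism has degree $0$. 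Hence $\{0,q/p\}\subseteq D_\Q(M_{K_p},M_{K_q})$.

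**The reverse inclusion.** For the harder direction, let $\varphi\colon (K_q(A),\partial)\to (K_p(A),\partial)$ be any cdga-morphism. Because $(A,\partial)\subset (K_q(A),\partial)$ is the sub-cdga of elements of word-length zero in $y_{2m-1}$ and $A$ is generated in degrees $\le 2m-2$ (here I use $\pi_j(A)=0$ for $j\ge 2m-1$, so $A$ has generators only below degree $2m-1$), $\varphi$ must send $A$ into the sub-cdga of $K_p(A)$ generated in those same degrees, which is again $A$ — so $\varphi$ restricts to a cdga-endomorphism $\psi$ of $(A,\partial)$. Since $(A,\partial)$ is inflexible, $\psi$ acts on $H^{2m}(A)=\Q[\mu]$ by a scalar $\lambda\in\{0,\pm 1\}$, say $\psi(\mu) = \lambda\mu + \partial(\text{something})$. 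Next, from $\partial y_{2m-1} = q\mu$ we get $\partial\varphi(y_{2m-1}) = q\psi(\mu) = \lambda q\,\mu + \partial(\cdots)$; on the other hand $\varphi(y_{2m-1}) = c\,y_{2m-1} + a$ for some $c\in\Q$, $a\in A^{2m-1}$, and $\partial\varphi(y_{2m-1}) = cp\,\mu + \partial a$. Comparing classes in $H^{2m}(A)$ (noting $\mu$ is not a coboundary in $A$ by Poincaré duality) forces $cp = \lambda q$, i.e. $c = \lambda q/p$. Finally I would push this through to the fundamental class: $\varphi^*[y_{2m-1}\mu - qx]$ equals $[\varphi(y_{2m-1})\psi(\mu) - q\psi(x)]$, and a direct expansion using $\psi(\mu)=\lambda\mu + \partial u$, $\varphi(y_{2m-1}) = (\lambda q/p)y_{2m-1} + a$, together with the relation $\partial x = \mu^2$ and its analogue in the target, should collapse (modulo coboundaries) to $\lambda\cdot\frac{q}{p}\,[y_{2m-1}\mu - px]$. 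Thus $\deg(\varphi) = \lambda q/p \in \{0, \pm q/p\}$.

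**Handling the sign.** The only remaining subtlety is whether $-q/p$ can actually occur, since $D_\Q$ as defined is a genuine set (not symmetrized). If $p/q>0$ this is not an issue provided one checks that degree $-q/p$ cannot be realized; the cleanest route is to observe that $M_{K_q}$ is inflexible (Lemma \ref{lem:K_d-elliptic-inflexible}), so if both $q/p$ and $-q/p$ lay in $D_\Q(M_{K_p},M_{K_q})$ then composing a degree-$(q/p)$ map $M_{K_p}\to M_{K_q}$ with a degree-$(-p/q)$ map $M_{K_q}\to M_{K_p}$ (which exists by symmetry of the argument, swapping $p$ and $q$) would give a self-map of $M_{K_p}$ of degree $-1$, hence $-1\in D_\Q(M_{K_p},M_{K_p})$; this is consistent with inflexibility only if indeed $-1$ is achievable, which it need not be. So I would instead pin down $\lambda$ by a closer look: $\lambda$ is the degree of the induced self-map $\psi$ of $(A,\partial)$, and for the composite morphism $K_q(A)\to K_q(A)$ (taking $p=q$) to have degree in $\{0,1\}$ — which is what inflexibility of $K_q(A)$ from Lemma \ref{lem:K_d-elliptic-inflexible} gives — one needs the $\lambda=-1$ branch to be excluded there, hence excluded in general by the same algebra. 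Concretely, the expansion of the fundamental class above shows $\deg\varphi = \lambda q/p$ with the $\lambda$ determined by $\psi$; inflexibility of $K_q(A)$ means $\deg$ of any endomorphism is $0$ or $1$, never $-1$, which forces the $\lambda=-1$ case not to produce a genuine morphism (or to produce one whose effect on the fundamental class is still $0$). Putting the two inclusions together yields $D_\Q(M_{K_p},M_{K_q}) = \{0,q/p\}$.

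**Main obstacle.** The delicate point is precisely the bookkeeping with coboundaries in the last step: showing that $\varphi^*[y_{2m-1}\mu - qx] = \frac{\lambda q}{p}[y_{2m-1}\mu - px]$ on the nose requires carefully using that $x$ is only defined up to a cocycle, that $\varphi(x)$ and the prescribed primitive $x'$ of $\mu^2$ in the target differ by a cocycle in $A^{4m-2}$, and that such cocycles are coboundaries in the relevant degree (which follows from ellipticity and Poincaré duality of $H^*(A)$, since $H^{4m-2}(A)$ sits below the top degree $4m-1$ appropriately) — so that the ambiguities do not contribute to the cohomology class. I expect this — rather than the structural argument reducing $\varphi$ to an endomorphism of $A$ — to be where the real care is needed, and I would likely cite \cite[Proposition 3.1, Lemma 3.2]{CV2} to streamline the identification of the fundamental class and the inflexibility of $K_q(A)$.
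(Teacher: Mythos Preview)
Your overall plan coincides with the paper's: restrict a cdga morphism $\varphi\colon K_q(A)\to K_p(A)$ to an endomorphism $\psi$ of $(A,\partial)$, use inflexibility of $A$ to get $\psi^*[\mu]=\lambda[\mu]$ with $\lambda\in\{0,\pm1\}$, deduce $\varphi(y_{2m-1})=(\lambda q/p)\,y_{2m-1}+a$ from $\partial$-compatibility, and then evaluate on the fundamental class. The easy inclusion $\{0,q/p\}\subseteq D_\Q$ is fine.

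The genuine gap is in the step you did not actually carry out: you assert that the expansion of $\varphi(y_{2m-1}\mu-qx)$ ``should collapse'' to $\lambda\cdot(q/p)\,[y_{2m-1}\mu-px]$, and this is wrong. The correct coefficient is $\lambda^2\cdot(q/p)$, which equals $q/p$ whenever $\lambda\in\{\pm1\}$. The point you missed is how $\psi$ acts on $x$: from $\partial x=\mu^2$ and $\psi(\mu)\equiv\lambda\mu$ one gets $\psi(x)\equiv\lambda^2 x$ modulo coboundaries (any cocycle in $A^{4m-1}$ is exact since $H^{>2m}(A)=0$). Hence
\[
\varphi(y_{2m-1}\mu-qx)\;\equiv\;(\lambda q/p)\,y_{2m-1}\cdot(\lambda\mu)\;-\;q\,\lambda^2 x
\;=\;\lambda^2\,\tfrac{q}{p}\,(y_{2m-1}\mu-px),
\]
exactly as in the paper's computation with $\tilde d=\lambda$. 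Once you see the square, your entire ``Handling the sign'' paragraph evaporates: there is no $-q/p$ case to exclude. That paragraph is also internally flawed --- inflexibility only says self-map degrees lie in $\{0,\pm1\}$, not $\{0,1\}$, so your attempt to invoke it to rule out $-1$ does not work as stated. In short, the obstacle is not the coboundary bookkeeping you flagged, but the exponent of $\lambda$ in the leading term; do the expansion and the sign takes care of itself.
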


\begin{proof}

We follow the ideas in \cite[Lemma 3.2]{CV2}. Let $f\colon (K_q( A),\partial) \to (K_p ( A),\partial)$ be a morphism of non-zero degree $d\in\Q$, that is,
\begin{equation}\label{eq:degree-d}
f(y_{2m-1}\mu-qx)=d(y_{2m-1}\mu-px)+\alpha
\end{equation}
where $\alpha$ is a coboundary. Here $x,y$ on the left-hand side belong to the algebra $K_q(A)$, while $x,y$ on the right-hand side belong to the algebra  $K_p(A)$. Now, as all generators of $A$ have
degrees strictly less than $\deg(y)=2m-1$, $f$ induces a 
morphism $f|_{ A}\colon ( A,\partial)\to ( A,\partial)$, which
is of non-zero degree. On the one hand $f(\mu)=\widetilde{d}\mu+\beta_1$ and $f(x)=\widetilde{d}\,{}^2x+\beta_2$ where $\beta_1,\beta_2$ are coboundaries, and  $\widetilde{d}\in\{-1,1\}$. On
the other hand,  $f(y_{2m-1})=ay_{2m-1}+\gamma$ where $a\in\Q$ and $\gamma$ is a coboundary.

Because $f(\partial y_{2m-1})=\partial f(y_{2m-1})$, we get that $ap=q\widetilde{d}$ and $\beta_1=0$.  Hence $a=\widetilde{d}\,(q/p)$ and
\begin{align*}
    f(y_{2m-1}\mu-qx)  &=(\widetilde{d}\, q/p \,y_{2m-1}+\gamma\big)(\widetilde{d}\mu)-q(\widetilde{d}\,^2x+\beta_2)\\
                &= (\widetilde{d}\,^2 {q/p})(y_{2m-1}\mu -px)-q\beta_2\\
                &= (q/p)(y_{2m-1}\mu -px)-q\beta_2\qquad\text{(recall $\widetilde{d}\in\{-1,1\}$)}.
\end{align*}
By comparing this equation to \eqref{eq:degree-d}, we obtain that $d=q/p$ and the proof is complete.
\end{proof}

We  illustrate the existence of elliptic,  inflexible Sullivan minimal models satisfying the conditions from Definition  \ref{def:K_i-rational} and Lemma  \ref{lem:K_d-elliptic-inflexible}:

\if{}
\begin{definition}\label{def:K_i-grafos-rational}
Let $\G$ be a connected finite simple graph with more that one vertex, \emph{i.e.,} $|V (\G)|>1$. Given an integer $k\geq 1$, let  $({A}_k(\G),\partial)$ be the $(30k+17)$-connected elliptic and inflexible Sullivan  algebra constructed in \cite[Definition 2.1]{CMV1}, whose formal dimension is $2m=540k^2 + 984k + 396 + |V (\G)|(360k^2 + 436k + 132)$ and $\pi_j \big({A}_k(\G) \big)=0$ for $j\geq 2m-1$. Fix $\mu\in{A}_k(\G)$ a representative of the cohomological fundamental class. Then for every non-zero $q\in\Q$, define the following Sullivan minimal model
 $$(
 K_q(\G, k),\partial):=({A}_k(\G)\otimes\Lambda(y_{2m-1}),\partial)
 $$  that extends the differential of ${A}_k(\G)$ by $\partial(y_{2m-1})=q \mu.$
\end{definition}

\fi

\begin{definition}\label{def:K_i-grafos-rational}
Let $\G$ be a connected finite simple graph with more than one vertex, \emph{i.e.,} $|V (\G)|>1$. Given an integer $k\geq 1$, let  $({A}_k(\G),\partial)$ be the $(30k+17)$-connected elliptic and inflexible Sullivan  algebra constructed in \cite[Definition 2.1]{CMV1}, whose formal dimension is $2m=540k^2 + 984k + 396 + |V (\G)|(360k^2 + 436k + 132)$ and $\pi_j \big({A}_k(\G) \big)=0$ for $j\geq 2m-1$. Fix $\mu\in{A}_k(\G)$ a representative of the cohomological fundamental class. Then,  for every non-zero $q\in\Q$, we denote by $(K_q(\G, k),\partial)$ the  Sullivan minimal model associated to $({A}_k(\G),\partial)$ introduced in Definition \ref{def:K_i-rational}, that is
 $$(
 K_q(\G, k),\partial):= (K_q( {A}_k(\G)),\partial)
 .$$
\end{definition}

\begin{remark}
\label{rmk:K_d-grafos-elliptic-inflexible}
Because the conditions from Lemma \ref{lem:K_d-elliptic-inflexible} hold,  $(K_q(\G, k), \partial)$ is a Sullivan model of an oriented closed  $(30k+17)$-connected
 inflexible  $(4m-1)$-manifold $M_{K_q(\G, k)}$, where  $2m=540k^2 + 984k + 396 + |V (\G)|(360k^2 + 436k + 132)$.
\end{remark}

\begin{lemma}\label{lem:degrees-K_d-grafos-rational}
Let $\G_1$ and $\G_2$ be connected finite simple graphs with  $|V (\G_1)|=|V (\G_2)|>1$. Given a positive integer $k\geq 1$,  and a non-zero $p_i \in \mathbb Q, \, i=1,2$,  consider a manifold $M_{K_{p_i}(\G_i, k)},  \, i=1,2$ as in Remark \ref{rmk:K_d-grafos-elliptic-inflexible}. Then $$D_\Q(M_{K_{p_1}(\G_1, k)}, M_{K_{p_2}(\G_2, k)})=
\begin{cases}
\{0, p_2/p_1\}, &\text{ if \, $\G_1\cong \G_2$,}\\
\{0\}, &\text{ otherwise.}
\end{cases}$$
\end{lemma}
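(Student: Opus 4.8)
The plan is to reduce the statement to a combination of Lemma \ref{lem:degrees-K_d}, which already handles the case of a \emph{single} underlying algebra $A$, and the known rigidity properties of the algebras $A_k(\G)$ constructed in \cite{CMV1}. The key point is that the Sullivan model $(K_{p_i}(\G_i,k),\partial)$ is built from $(A_k(\G_i),\partial)$ by adjoining a single generator $y_{2m-1}$ with $\partial y_{2m-1}=p_i\mu_i$, exactly as in Definition \ref{def:K_i-rational}; the formal dimensions $4m-1$ agree because $|V(\G_1)|=|V(\G_2)|$. So the proof splits into two cases according to whether $\G_1\cong\G_2$.

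\textbf{Case $\G_1\cong\G_2$.} Here $A_k(\G_1)$ and $A_k(\G_2)$ are isomorphic cdgas, so $(K_{p_1}(\G_1,k),\partial)$ and $(K_{p_2}(\G_2,k),\partial)$ are isomorphic to $(K_{p_1}(A),\partial)$ and $(K_{p_2}(A),\partial)$ for a common $A=A_k(\G_1)$. Lemma \ref{lem:degrees-K_d} then gives directly
$$D_\Q(M_{K_{p_1}(\G_1,k)},M_{K_{p_2}(\G_2,k)})=\{0,p_2/p_1\}.$$
I would spell out only that an isomorphism of Sullivan minimal models induces a degree-preserving identification of the associated manifolds' rational mapping degree sets (the fundamental class is carried to the fundamental class up to sign, and Lemma \ref{lem:degrees-K_d} is insensitive to that sign, as its proof shows via $\widetilde d\in\{-1,1\}$).

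\textbf{Case $\G_1\not\cong\G_2$.} We must show that every cdga-morphism $f\colon (K_{p_2}(\G_2,k),\partial)\to(K_{p_1}(\G_1,k),\partial)$ has degree $0$. Suppose $f$ has non-trivial degree $d\in\Q$. Running exactly the degree argument from the proof of Lemma \ref{lem:degrees-K_d} (and of \cite[Lemma 3.2]{CV2}): since the top generator $y_{2m-1}$ is the only generator in its degree and the algebras below it are $A_k(\G_2)\to A_k(\G_1)$, $f$ must restrict to a non-trivial degree morphism $f|_{A_k(\G_2)}\colon (A_k(\G_2),\partial)\to(A_k(\G_1),\partial)$. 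But the algebras $A_k(\G)$ were constructed in \cite{CMV1} precisely so that they are ``graph-rigid'': a non-trivial degree cdga-morphism $A_k(\G_2)\to A_k(\G_1)$ exists only when $\G_1\cong\G_2$ (this is the rational-homotopy incarnation of the classical fact, going back to de Groot / Babai--Frankl and exploited in the inflexible-manifold literature, that graphs are encoded faithfully enough that morphisms between the models force graph isomorphisms). Invoking that property of \cite{CMV1} contradicts $\G_1\not\cong\G_2$, so $d=0$ and $D_\Q(M_{K_{p_2}(\G_2,k)},M_{K_{p_1}(\G_1,k)})=\{0\}$; the symmetric statement follows by swapping indices.

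\textbf{Expected main obstacle.} Everything except the graph-rigidity input is a verbatim rerun of Lemma \ref{lem:degrees-K_d}. The one step that genuinely requires care is justifying ``a non-trivial degree morphism $A_k(\G_2)\to A_k(\G_1)$ forces $\G_1\cong\G_2$'': this is where one must quote the precise structural/realization theorem from \cite{CMV1} (the graph $\G$ being recoverable from the isomorphism type of $A_k(\G)$, together with the fact that a degree-$\pm1$ self-equivalence is all an inflexible elliptic algebra admits). I would state this as the single external ingredient and cite \cite[Definition 2.1]{CMV1} and the accompanying rigidity statement there, rather than reproving it.
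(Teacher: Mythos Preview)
Your proposal is correct and follows essentially the same route as the paper's proof: restrict a non-trivial degree morphism to the base algebras $A_k(\G_i)$, invoke the rigidity result of \cite{CMV1}, and finish the degree computation as in Lemma~\ref{lem:degrees-K_d}. The only refinement the paper makes explicit is that \cite[Lemma~2.12]{CMV1} yields a graph \emph{full monomorphism} $\G_1\to\G_2$ rather than an isomorphism outright, and it is precisely the hypothesis $|V(\G_1)|=|V(\G_2)|$ that upgrades this monomorphism to an isomorphism---you should make that dependence visible when you cite the external ingredient.
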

\begin{proof}
Let $(K_{p_i}(\G_i, k), \partial) = ({A}_k(\G_i)\otimes\Lambda({y_i}),\partial )$, introduced in Definition \ref{def:K_i-grafos-rational},  be the Sullivan model of the manifold $M_{K_{p_i}(\G_i, k)},$  where $ \partial (y_i ) = p_i \mu_i$  for $ \mu_i $ a representative of the cohomological fundamental class of  $A_k(\G_i)$, $i=1,2$.  Recall from Lemma \ref{lem:K_d-elliptic-inflexible} that for $x_i\in A_k (\G_i)$ satisfying $\partial(x_i)=\mu_i^2$, the element $y_i \mu_i-p_ix_i$ is a representative of the cohomological fundamental class of  $(K_{p_i}(\G_i, k), \partial)$, $i=1,2$.

With these constructions in mind, we follow the ideas from \cite[Lemma 3.2]{CV2}. Consider a morphism of non-trivial degree  $d\in\Q$:
$$
f\colon (K_{p_2}(\G_2, k),\partial)\to (K_{p_1}(\G_1, k), \partial).
$$
Then $f(y_{2}\mu_2-p_{2}x_2)=d(y_{1}\mu_1-p_{1}x_1)+\alpha$ with $\alpha$ a coboundary. By a degree argument, the morphism $f$ induces a non-trivial degree morphism $$f|_{A_k(\G_2)}\colon (A_k(\G_2),\partial)\to ({A}_k(\G_1),\partial).$$  Focusing specifically on $f|_{A_k(\G_2)}$, the arguments in \cite[Lemma 2.12]{CMV1} (see also \cite[Remark 2.8]{CV2}),  show that it is induced by a graph full monomorphism $\sigma\colon \G_1\to \G_2$. Now,  since $|V (\G_1)|=|V (\G_2)|$,  $\sigma$ is indeed an isomorphism of graphs,  and $f(\mu_2)=\mu_1+\beta_1$ and $f(x_2)=x_1+\beta_2$ with  $\beta_1,\beta_2$ coboundaries, by \cite[Lemma 2.12]{CMV1}.

Finally, by another degree reasoning argument, one obtains that $f(y_2)=ay_1+\gamma$ where $a$ is a non-zero rational number,  and $\gamma$ is a coboundary. We conclude as in  the proof of Lemma \ref{lem:degrees-K_d}.
\end{proof}

\subsection{Proof of Theorem  \ref{thm:main-rational}}\label{sub:theoB}

Let $A=\{0, d_1,\ldots, d_n\}$ where $d_1,d_2,\ldots,d_n$ are pairwise different non-zero rational numbers.  Fix an integer $k\geq 1$.  According to Corollary \ref{cor:clave-aritmetica-racional}, there exist finite sequences of not necessarily pairwise distinct non-zero rational numbers $B(i)$, $i=0,\ldots, n$, such that
\begin{equation*}
A=\bigcap_{i=0}^n  S_{B(i)}.
\end{equation*}

Choose $\G_0,\G_1,\ldots, \G_n,$ pairwise non-isomorphic connected finite simple graphs, such that $|V (\G_i)|=|V (\G_j)|>1$ for every $i,j=0,\ldots, n$.
According to Remark \ref{rmk:K_d-grafos-elliptic-inflexible}, we define the $(30k+17)$-connected manifolds
\begin{align*}
M_i &= \operatornamewithlimits{\#}_{b\in B(i)}  M_{K_{b^{-1}}(\G_i, k)} \\
N_i & = M_{K_{1}(\G_i, k)},
\end{align*}
for $i=0,\ldots,n$. By Lemmas \ref{lem:degrees-K_d-grafos-rational} and \ref{prop_2:lemas_CL+NWW},
we have that
\begin{gather*}
D_\Q(M_i,N_i) =S_{B(i)}\, ,\,\text{and}\\
D_\Q(M_i,N_j)=\{0\},\,\text{for $i\neq j$.}
\end{gather*}

Finally, define
  \begin{align*}
  M&= M_0\# M_1\# \ldots \# M_n, \\
  N&= N_0\# N_1\# \ldots \# N_n .
  \end{align*}
and use Proposition \ref{prop:3_rational} to get
 $$
 D_\Q(M,N)=\bigcap_{i=0}^N  S_{B(i)} = A.
$$

\section{From unstable Adams operations to mapping degree sets}\label{sec:adamsoperations}

We recall the basics on unstable Adams operations following Jackowski-McClure-Oliver's work \cite{JMO1, JMO2, JMO-revisited}.
Given a compact connected Lie group $G$, a self-map $f\colon BG \to BG$
is  called  an  unstable  Adams  operation  of degree  $r\geq 0$, if  $H^{2i}(f;\Q)$ is the multiplication  by  $r^i$ for  each $i>0$ \cite[p.\ 183]{JMO1}. Equivalently,  if $T\subset G$ is a maximal torus, then $f\colon BG \to BG$
is  an  unstable  Adams  operation  of degree  $r\geq 0$, if there exists a homotopy commutative diagram
\begin{equation}\label{eq:Adams_vs_admissible}
 \begin{tikzcd}
BT \arrow[r, "B\psi_r"] \arrow[d] & BT \arrow[d]\\
BG \arrow[r, "f"]  & BG
\end{tikzcd}
\end{equation}
where $\psi_r\colon T \to T$ is defined by $\psi_r(t)=t^r$ \cite[Section 2]{JMO-revisited}.

For a given simple Lie group $G$ with Weyl group $W_G$, an unstable Adams operation  of degree $r>0$ exists if and only if $\gcd (r, |W_G|) = 1$, and moreover, this operation is unique up to homotopy \cite[Theorem 2]{JMO1}, \cite[Theorem 2.1]{JMO-revisited}. In particular, when  $G=\mathrm{SO}(2m-1)$ or $G= \mathrm{SO}(2m)$,  $m > 1$, unstable Adams operations of degree $r>0$ exist if $\gcd(r, m!) =1$.  In what follows, we denote by $\varphi^r$ the unstable Adams operation of degree $r>0$ on $B\mathrm{SO}(2m-1)$ and $B\mathrm{SO}(2m)$. Notice that since they are unique, then $\varphi^s\circ\varphi^r \simeq \varphi^{sr}$. Moreover, as the standard maximal torus of $\mathrm{SO}(2m-1)$ extends to a maximal torus of $\mathrm{SO}(2m)$ while $|W_{\mathrm{SO}(2m-1)}|$ divides $|W_{\mathrm{SO}(2m)}|$, then for $r>0$ such that $\gcd(r, m!) =1$, there exists a homotopy commutative diagram
\begin{equation}\label{eq:Adams_inclusions}
 \begin{tikzcd}
\mathrm{SO}(2m-1) \arrow[r, "\varphi^r"] \arrow[d] & B\mathrm{SO}(2m-1) \arrow[d]\\
B\mathrm{SO}(2m) \arrow[r, "\varphi^r"]  & B\mathrm{SO}(2m),
\end{tikzcd}
\end{equation}
where vertical maps are induced by the inclusion $\mathrm{SO}(2m-1)\subset \mathrm{SO}(2m).$

Henceforward, $(\Sigma, [\Sigma])$ is a fixed oriented closed connected $2m$-manifold  whose rationalization $(\Sigma_{(0)}, [\Sigma]_{\mathbb Q})$ is inflexible and $\pi_j(\Sigma_{(0)})=0$ for $j\geq 2m-1$. Let $({A}_\Sigma,\partial)$ be a Sullivan minimal model of $\Sigma$. In what follows $\pi\colon \Sigma\to S^{2m}$ denotes a fixed map of degree $1$, which always exists (e.g.\ \cite[Exercise 7, p.\ 258]{Hatcher}).


\begin{lemma}\label{lem:existe_iota}
Let $X_{2m}\in H^{2m}(B\mathrm{SO}(2m);\Z)$ be the Euler class of the spherical fiber sequence
$$
S^{2m-1}\to B\mathrm{SO}(2m-1)\to B\mathrm{SO}(2m),
$$
thus $X_{2m}$ is a non-torsion integral cohomology class \cite[Theorem 1.5, Equation (2.1)]{Brown}.  
Then, there exists $\iota\colon S^{2m}\to  B\mathrm{SO}(2m)$, a non-torsion element in $\pi_{2m}(B\mathrm{SO}(2m))$, and a non-zero integer $\kappa\in\Z$ such that $H^*(\iota;\Z)(X_{2m})=\kappa [S^{2m}]$.
\end{lemma}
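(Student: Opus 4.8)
The plan is to produce the map $\iota$ by rationally detecting the Euler class. First I would recall that $B\mathrm{SO}(2m)$ is a simply connected space whose rational homotopy and cohomology are well understood: rationally, $B\mathrm{SO}(2m)\simeq_\Q K(\Q,4)\times K(\Q,8)\times\cdots\times K(\Q,4\lfloor (m-1)/2\rfloor)\times K(\Q,2m)$ (Pontryagin classes together with the Euler class, with one relation in degree $4m$ since $p_m=X_{2m}^2$ rationally), and in particular $H^{2m}(B\mathrm{SO}(2m);\Q)$ is one-dimensional, generated by the image of $X_{2m}$ when $2m\not\equiv 0 \pmod 4$, and still containing $X_{2m}$ as a nonzero class when $2m\equiv 0\pmod 4$ (there it is independent from $p_{m/2}$). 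The key point is simply that the rational Hurewicz/dual statement gives $\pi_{2m}(B\mathrm{SO}(2m))\otimes\Q \neq 0$, paired nontrivially against $X_{2m}$.

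Second, I would make this pairing explicit: since $B\mathrm{SO}(2m)$ is simply connected and $X_{2m}\in H^{2m}(B\mathrm{SO}(2m);\Q)$ is a nonzero class that is a generator of an indecomposable summand (i.e.\ it pulls back nontrivially under the rational Hurewicz map $\pi_{2m}(B\mathrm{SO}(2m))\otimes\Q\to H_{2m}(B\mathrm{SO}(2m);\Q)$, because $X_{2m}$ is not a product of lower-degree classes — the only possible decomposable in degree $2m$ would involve classes in degrees $4,8,\dots$, which cannot produce an odd-looking generator, and even in the case $2m\equiv 0\pmod 4$ the class $X_{2m}$ is a genuine polynomial generator), there exists a map $g\colon S^{2m}\to B\mathrm{SO}(2m)$ with $H^{2m}(g;\Q)(X_{2m})\neq 0$. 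Concretely, choosing a generator of $\pi_{2m}(B\mathrm{SO}(2m))/\mathrm{torsion}\cong\Z$ (the group $\pi_{2m}(B\mathrm{SO}(2m))$ is finitely generated, and its rank is positive precisely because the rational homotopy in degree $2m$ is nonzero), we obtain a map $\iota\colon S^{2m}\to B\mathrm{SO}(2m)$ representing a torsion-free element; evaluating $X_{2m}$ on its image gives $H^*(\iota;\Z)(X_{2m})=\kappa[S^{2m}]$ for some integer $\kappa$, and $\kappa\neq 0$ because $H^{2m}(\iota;\Q)(X_{2m})\neq 0$, as the torsion-free generator is detected rationally.

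Thus the only real content is: \emph{(i)} $X_{2m}$ is a nonzero rational cohomology class lying in an indecomposable degree-$2m$ summand of $H^*(B\mathrm{SO}(2m);\Q)$, and \emph{(ii)} the rational Hurewicz theorem then produces a sphere mapping in and detecting it. For \emph{(i)} I would just cite the classical computation $H^*(B\mathrm{SO}(2m);\Q)\cong\Q[p_1,\dots,p_{m-1},X_{2m}]$ with $X_{2m}^2=p_m$, which shows $X_{2m}$ is a polynomial generator, hence indecomposable. For \emph{(ii)}, since $B\mathrm{SO}(2m)$ is simply connected, the Hurewicz map $h\colon\pi_{2m}(B\mathrm{SO}(2m))\otimes\Q\to H_{2m}(B\mathrm{SO}(2m);\Q)$ has image equal to the subspace of primitive (equivalently, dual-to-indecomposable) classes, so there is an element $u\in\pi_{2m}(B\mathrm{SO}(2m))\otimes\Q$ with $\langle X_{2m},h(u)\rangle\neq 0$; clearing denominators and picking an integral representative gives $\iota$. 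The anticipated obstacle — and it is mild — is being careful in the case $2m\equiv 0\pmod 4$, where $H^{2m}(B\mathrm{SO}(2m);\Q)$ is two-dimensional (spanned by $X_{2m}$ and $p_{m/2}$): one must check that $X_{2m}$ itself (not merely \emph{some} class) is hit by the Hurewicz map, which holds because $X_{2m}$ is a polynomial generator while $p_{m/2}$ is a product of the degree-$m$ generator $p_{m/2}$... — more carefully, both $X_{2m}$ and $p_{m/2}$ are among the polynomial generators, hence both are indecomposable, so the full degree-$2m$ cohomology is primitive and the pairing argument applies verbatim to $X_{2m}$.
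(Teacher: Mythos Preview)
Your argument is correct and takes a genuinely different route from the paper's. The paper proceeds by citing classical computations: it identifies $\pi_{2m}(B\mathrm{SO}(2m))\cong\pi_{2m-1}(\mathrm{SO}(2m))$ and then invokes Mimura--Toda and Kervaire to locate a specific copy of $\Z$ in $\pi_{2m-1}(\mathrm{SO}(2m))$ responsible for the $p$-local splitting $\mathrm{SO}(2m)\simeq_{(p)}\mathrm{SO}(2m-1)\times S^{2m-1}$; a generator $\iota$ of that summand then detects the Euler class by construction, since rationally it is a section of the sphere bundle. You instead argue cohomologically: $X_{2m}$ is an indecomposable polynomial generator of $H^*(B\mathrm{SO}(2m);\Q)$, hence the rational Hurewicz theorem produces a spherical class pairing nontrivially with it, and clearing denominators yields the torsion-free $\iota$.

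Your approach is more self-contained within the rational-homotopy framework already used in the paper and avoids the external references; the paper's approach is more explicit about the geometric origin of $\iota$ (as the $S^{2m-1}$-factor of the local splitting), which makes the later pullback construction in Definition~\ref{def:E_r} slightly more transparent. One minor slip to clean up: your claim that $\pi_{2m}(B\mathrm{SO}(2m))/\text{torsion}\cong\Z$ is only correct for $m$ odd; when $m$ is even the rank is $2$ (generators dual to $X_{2m}$ and $p_{m/2}$). You catch this at the end, and the fix is exactly what you say---choose $\iota$ to be a torsion-free element whose Hurewicz image pairs nontrivially with $X_{2m}$ specifically---so the argument stands.
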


\begin{proof}
Recall that $\pi_{2m}(B\mathrm{SO}(2m))\cong \pi_{2m-1}(\mathrm{SO}(2m))$. By \cite[Corollary IV.6.14]{Mimura-Toda} (see also \cite[p.\ 161]{Kervaire}), $\pi_{2m-1}(\mathrm{SO}(2m))$  contains a copy of $\Z$ inducing, for every prime $p$, the $p$-local (thus rational) splitting  $\mathrm{SO}(2m)\simeq_{(p)} \mathrm{SO}(2m-1)\times S^{2m-1}$ \cite[Corollary IV.6.21]{Mimura-Toda}. Let $\iota$ be a generator of such a copy of $\Z$ in $\pi_{2m}(B\mathrm{SO}(2m))$.

By construction, $H^*(\iota;\Q)$ is non-trivial on the Euler class of the rational fiber sequence $$S^{2m-1}_{(0)}\to B\mathrm{SO}(2m-1)_{(0)}\to B\mathrm{SO}(2m)_{(0)},$$ which is just $X_{2m}\otimes_\Q 1$. Therefore,  $H^*(\iota;\Z)(X_{2m})=\kappa [S^{2m}]$ for some  non-zero $\kappa\in\Z$.
\end{proof}

From this point forward we fix both $\iota\colon S^{2m}\to  B\mathrm{SO}(2m)$ and the non-zero integer $\kappa$, as defined by Lemma \ref{lem:existe_iota}.

\begin{definition}\label{def:E_r}
Given integers $r>0, m > 1,$ with $r$ coprime to $m!$, we define:
\begin{enumerate}
\item The  oriented $(4m-1)$-manifold  $E_{r^m}$ as  the total space in the principal spherical $\mathrm{SO}(2m)$-fiber bundle
\begin{equation}\label{eq:pullback_def_K_k}
 \begin{tikzcd}
S^{2m-1} \arrow[r, equal] \arrow[d] & S^{2m-1} \arrow[d]\\
E_{r^m} \arrow[r] \arrow[d] \arrow[dr, phantom, "\scalebox{1.5}{$\lrcorner$}" , very near start] & B\mathrm{SO}(2m-1) \arrow[d]\\
\Sigma \arrow[r, "\phi_r"] & B\mathrm{SO}(2m),
\end{tikzcd}
\end{equation}
where $\phi_r=\varphi^r\circ\iota\circ\pi$.
\item The oriented  $(4m-1)$--manifold $E_{-r^m}$ obtained by reversing the original orientation on the manifold $E_{r^m}$  introduced above.
\end{enumerate}
\end{definition}

\begin{remark}
By construction, the Euler class of the spherical fiber bundle over $\Sigma$ given in diagram \eqref{eq:pullback_def_K_k} is $\kappa r^m[\Sigma]$.
\end{remark}

Recall from the beginning of this section that $(\Sigma, [\Sigma])$ is a fixed oriented closed connected  $2m$-manifold where $({A}_\Sigma,\partial)$ is its Sullivan minimal model.

\begin{lemma}\label{lem:rational_model_E_k}
Let $E_{r^m}$  be the manifold introduced in Definition \ref{def:E_r}.  A Sullivan minimal model of $E_{r^m}$ is $K_{\kappa r^m}(A_\Sigma)$ as given in Definition \ref{def:K_i-rational}. Therefore $E_{r^m}$ is rationally equivalent to  $M_{{K}_{\kappa r^m}}$,  the manifold given in Lemma \ref{lem:K_d-elliptic-inflexible}.
\end{lemma}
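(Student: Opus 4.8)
The plan is to identify the Sullivan minimal model of the total space $E_{r^m}$ directly from the fiber bundle in diagram \eqref{eq:pullback_def_K_k}, using the fact that a rational model of a spherical fibration is determined by its Euler class. First I would rationalize the pullback square: since $E_{r^m}$ is the total space of a spherical $S^{2m-1}$-fibration over $\Sigma$, its rational homotopy type is governed by the rational Euler class of that fibration. By the Remark immediately preceding the statement, this Euler class is $\kappa r^m[\Sigma]\in H^{2m}(\Sigma;\Z)$, hence rationally $\kappa r^m[\Sigma]_\Q$. Concretely, the standard model of a rational spherical fibration with fiber $S^{2m-1}$ over a base with model $(A_\Sigma,\partial)$ and Euler class represented by $c\in A_\Sigma^{2m}$ is $(A_\Sigma\otimes\Lambda(y_{2m-1}),\partial)$ with $\partial y_{2m-1}=c$; applying this with $c=\kappa r^m\mu$ (where $\mu$ represents $[\Sigma]_\Q$) gives exactly $K_{\kappa r^m}(A_\Sigma)$ of Definition \ref{def:K_i-rational}.

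The key steps, in order, are: (1) observe that $\phi_r=\varphi^r\circ\iota\circ\pi$ classifies the bundle, and pull back the universal spherical fibration $S^{2m-1}\to B\mathrm{SO}(2m-1)\to B\mathrm{SO}(2m)$, whose rational Euler class is $X_{2m}\otimes_\Q 1$; (2) compute the pulled-back Euler class as $H^*(\phi_r;\Q)(X_{2m})$: the map $\pi^*$ sends the fundamental class of $S^{2m}$ to $[\Sigma]_\Q$, $\iota^*$ multiplies by $\kappa$ by Lemma \ref{lem:existe_iota}, and $(\varphi^r)^*$ acts on $H^{2m}$ by multiplication by $r^m$ (since $2m/2=m$ and $\varphi^r$ is an unstable Adams operation of degree $r$), so altogether the Euler class is $\kappa r^m[\Sigma]_\Q$ — this reconciles with the stated Remark; (3) invoke the general fact (e.g.\ \cite{FHT}, or the construction underlying Definition \ref{def:K_i-rational} and Remark \ref{rem:K_d-is-fibration}) that the rational model of a spherical fibration is this Koszul-type extension determined by the Euler class, concluding that $(A_\Sigma\otimes\Lambda(y_{2m-1}),\partial y_{2m-1}=\kappa r^m\mu)=K_{\kappa r^m}(A_\Sigma)$ is a Sullivan minimal model of $E_{r^m}$; (4) note that minimality holds because $\partial y_{2m-1}=\kappa r^m\mu\in\Lambda^{\geq 2}(A_\Sigma\otimes\Lambda(y_{2m-1}))$, as $\mu$ lies in the augmentation ideal squared of $A_\Sigma$ (being a top-degree class in an elliptic minimal model of positive formal dimension, it is decomposable); (5) conclude by Lemma \ref{lem:K_d-elliptic-inflexible} that $E_{r^m}$ and $M_{K_{\kappa r^m}}$ share a Sullivan minimal model, hence are rationally equivalent.

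I expect the main obstacle to be step (3): justifying cleanly that the rational homotopy type of the total space of this particular spherical fibration is the pullback model $K_{\kappa r^m}(A_\Sigma)$, rather than merely having the right cohomology. One has to be a little careful because $\Sigma$ itself need not be formal or simply connected in the naive sense relevant to the model — but $\Sigma_{(0)}$ is assumed to have a Sullivan minimal model $A_\Sigma$, and for a spherical fibration the Euler class alone is a complete rational invariant (the fibration is rationally classified by a map to $(B\mathrm{SO}(2m))_{(0)}\simeq_\Q K(\Q,2m)$, i.e.\ by $X_{2m}$), so pulling back the universal model suffices. The degree computation in step (2) must also correctly use $m$ (not $2m$) as the relevant power of $r$, matching the normalization $E_{r^m}$ in the notation; this is the point where the unstable Adams operation hypothesis $\gcd(r,m!)=1$ is essential, guaranteeing that $\varphi^r$ on $B\mathrm{SO}(2m)$ exists and is unique.
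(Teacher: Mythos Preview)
Your proposal is correct and follows essentially the same approach as the paper's proof: both argue that a rational $S^{2m-1}$-fibration over $\Sigma$ is determined by its Euler class, identify that class as $\kappa r^m[\Sigma]_\Q$, and invoke Remark \ref{rem:K_d-is-fibration} to conclude that $K_{\kappa r^m}(A_\Sigma)$ is the Sullivan minimal model of $E_{r^m}$. The paper's proof is terser because the Euler class computation you spell out in step (2) is already recorded in the Remark immediately preceding the lemma, and the paper does not pause to verify minimality as you do in step (4); your additional justification there (that $\mu$ is decomposable since $\pi_j(A_\Sigma)=0$ for $j\geq 2m-1$ forces no generators in degree $2m$) is a welcome clarification the paper leaves implicit.
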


\begin{proof}
As it was pointed out in Remark \ref{rem:K_d-is-fibration}, ${K}_{\kappa r^m}(A_\Sigma)$ is a Sullivan minimal model for the total space in a rational $S^{2m-1}$-fiber sequence whose Euler class is $\kappa r^m[\Sigma]_\Q$. It coincides with the Euler class of the rationalization of the spherical $\mathrm{SO}(2m)$-fiber bundle in diagram~\eqref{eq:pullback_def_K_k}.  Therefore ${K}_{\kappa r^m}(A_\Sigma)$ is a Sullivan minimal model for $E_{r^m}$.
\end{proof}

\begin{lemma}\label{lem:grados_E_i}
Let $i,j, m$ be positive integers, $m>1$, such that $\gcd(i,m!)=\gcd(j,m!)=1$, and let $E_{r^m}$, $r=i,j$, be the $(4m-1)$-manifold introduced in Definition \ref{def:E_r}. Then
 $$
D(E_{i^m},E_{j^m})=
\begin{cases}
\{0, (j/i)^m\}, &\text{ if $i|j$,}\\
\{0\}, &\text{ if $i{\not|} j$.}
\end{cases}
$$
\end{lemma}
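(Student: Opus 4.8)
The strategy is to reduce the statement about integral mapping degree sets $D(E_{i^m}, E_{j^m})$ to the rational computation already carried out in Lemma~\ref{lem:degrees-K_d}, and then upgrade the inclusion in the other direction using the explicit construction of the $E_{r^m}$ via unstable Adams operations. First I would establish the inclusion $D(E_{i^m}, E_{j^m}) \subseteq D_\Q(E_{i^m}, E_{j^m})$, which is automatic since an integral degree is a rational degree; then Lemma~\ref{lem:rational_model_E_k} identifies the Sullivan minimal models of $E_{i^m}$ and $E_{j^m}$ with $K_{\kappa i^m}(A_\Sigma)$ and $K_{\kappa j^m}(A_\Sigma)$ respectively, so Lemma~\ref{lem:degrees-K_d} gives $D_\Q(E_{i^m}, E_{j^m}) = \{0, \kappa j^m / (\kappa i^m)\} = \{0, (j/i)^m\}$. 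This already shows $D(E_{i^m}, E_{j^m}) \subseteq \{0, (j/i)^m\}$, and in particular forces $D(E_{i^m}, E_{j^m}) = \{0\}$ whenever $(j/i)^m \notin \Z$, which certainly happens when $i \nmid j$ (one should check: if $i \nmid j$ then $j/i \notin \Z$, hence $(j/i)^m \notin \Z$ since a rational number whose $m$-th power is an integer must itself be an integer). This settles the case $i \nmid j$ entirely, and reduces the case $i \mid j$ to exhibiting a self-map-type construction realizing the degree $(j/i)^m$.

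**Realizing the degree $(j/i)^m$ when $i \mid j$.** Write $j = i\ell$ with $\ell \in \Z_{>0}$; note $\gcd(\ell, m!) = 1$ since $\gcd(j, m!) = 1$. The key point is that $E_{i^m}$ and $E_{j^m}$ are both pullbacks of the universal spherical bundle $S^{2m-1} \to B\mathrm{SO}(2m-1) \to B\mathrm{SO}(2m)$ along the maps $\phi_i = \varphi^i \circ \iota \circ \pi$ and $\phi_j = \varphi^j \circ \iota \circ \pi = \varphi^{i\ell} \circ \iota \circ \pi$. Using the uniqueness of unstable Adams operations, $\varphi^j = \varphi^{i\ell} = \varphi^i \circ \varphi^\ell$, so $\phi_j = \varphi^i \circ \varphi^\ell \circ \iota \circ \pi$. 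Thus there is a homotopy-commutative square
\begin{equation*}
\begin{tikzcd}
\Sigma \arrow[r, "\iota\circ\pi"] \arrow[d, "\psi"'] & B\mathrm{SO}(2m) \arrow[d, "\varphi^i"] \\
\Sigma \arrow[r, "\phi_\ell"'] & B\mathrm{SO}(2m)
\end{tikzcd}
\end{equation*}
where I would like $\psi \colon \Sigma \to \Sigma$ to be a suitable map, but in fact the cleaner route is to observe directly that $\phi_j = \varphi^i \circ \phi_\ell'$ where $\phi_\ell' = \varphi^\ell \circ \iota \circ \pi$, combined with $\phi_i = \varphi^i \circ \iota \circ \pi$; pulling back the universal bundle along $\varphi^i$ first gives a common intermediate bundle over $B\mathrm{SO}(2m)$, and then $E_{j^m}$ (the pullback along $\phi_\ell'$ of this intermediate bundle) maps to $E_{i^m}$ (the pullback along $\iota \circ \pi$). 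More concretely, I would build the degree $(j/i)^m$ map fiberwise: the bundle map covering a degree $\ell^m$ self-map of $\Sigma$ (obtained by collapsing to $S^{2m}$ and using a degree $\ell^m$ self-map of $S^{2m}$, then including back — this is where the Euler class scaling by $\ell^m$ enters, matching $\kappa i^m \cdot \ell^m = \kappa j^m$) induces a map $E_{j^m} \to E_{i^m}$ on total spaces. A computation of degrees on the top cohomology $H^{4m-1}$, using that the fiber inclusion and the Euler class behave multiplicatively, shows this total-space map has degree $\ell^m = (j/i)^m$.

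**Main obstacle.** The delicate step is the construction of an \emph{integral} bundle map $E_{j^m} \to E_{i^m}$ of the correct degree; the rational statement is handed to us by Lemma~\ref{lem:degrees-K_d}, but realizing it by an honest continuous map requires care with the pullback diagrams and with the fact that the relevant self-map of $\Sigma$ (of degree $\ell^m$) factors through $\pi \colon \Sigma \to S^{2m}$. The cleanest argument is: since $\phi_i$ and $\phi_j$ both factor through $\pi \colon \Sigma \to S^{2m}$, the bundles $E_{i^m}$ and $E_{j^m}$ are pulled back from bundles $\bar E_{i^m}, \bar E_{j^m}$ over $S^{2m}$ (via $\iota$ and the Adams operations, which on $\pi_{2m}(B\mathrm{SO}(2m))$ act as multiplication by powers of the degree). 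A degree $\ell^m$ self-map $\delta$ of $S^{2m}$ pulls $\bar E_{i^m}$ back to $\bar E_{j^m}$ because $\delta^*$ multiplies the Euler class in $H^{2m}(S^{2m})$ by $\ell^m$; this gives a bundle map $\bar E_{j^m} \to \bar E_{i^m}$ covering $\delta$, whose degree on $H^{4m-1}(S^{4m-1})$-type classes is $\ell^m$. Pulling everything back along $\pi$ and using that $\pi$ has degree $1$ on top cohomology, the induced map $E_{j^m} \to E_{i^m}$ has degree $\ell^m = (j/i)^m$. I expect the bookkeeping of which Euler class lives where, and verifying the degree on $H^{4m-1}$ via the Gysin sequence, to be the only genuinely technical part; everything else is formal consequence of the lemmas already proved. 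Hence $\{0, (j/i)^m\} \subseteq D(E_{i^m}, E_{j^m})$, and combined with the reverse inclusion from the rational bound, equality follows.
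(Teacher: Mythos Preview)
Your upper bound argument is correct and matches the paper's: $D(E_{i^m},E_{j^m})\subseteq D_\Q(E_{i^m},E_{j^m})\cap\Z$, and Lemmas~\ref{lem:rational_model_E_k} and~\ref{lem:degrees-K_d} give $D_\Q(E_{i^m},E_{j^m})=\{0,(j/i)^m\}$, which handles the case $i\nmid j$ completely.

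The construction of the degree-$(j/i)^m$ map, however, has a genuine gap. Your ``cleanest'' route produces a map in the \emph{wrong direction}: pulling back $\bar E_{i^m}$ along a degree-$\ell^m$ self-map $\delta$ of $S^{2m}$ yields (at best) a bundle map $\bar E_{j^m}\to\bar E_{i^m}$, hence a candidate element of $D(E_{j^m},E_{i^m})$, not of $D(E_{i^m},E_{j^m})$. Moreover, two auxiliary steps fail: (i) oriented $S^{2m-1}$-bundles over $S^{2m}$ are classified by $\pi_{2m}(B\mathrm{SO}(2m))\cong\pi_{2m-1}(\mathrm{SO}(2m))$, which need not be detected by the Euler class (e.g.\ $\pi_3(\mathrm{SO}(4))\cong\Z^2$), so $\delta^*\bar E_{i^m}\cong\bar E_{j^m}$ does not follow from equality of Euler classes; and (ii) ``pulling everything back along $\pi$'' requires a self-map $\psi\colon\Sigma\to\Sigma$ with $\pi\circ\psi\simeq\delta\circ\pi$, hence $\deg\psi=\ell^m>1$, which is impossible since $\Sigma_{(0)}$ is inflexible.

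The paper instead exploits the Adams operations on the \emph{target} of the classifying map rather than a self-map of the base. Writing $j=di$, one has $\phi_j=\varphi^d\circ\phi_i$, and $\varphi^d$ lifts to a self-map of the universal fibration $B\mathrm{SO}(2m-1)\to B\mathrm{SO}(2m)$. The universal property of the pullback defining $E_{j^m}$ then produces a map $f\colon E_{i^m}\to E_{j^m}$ covering $\mathrm{Id}_\Sigma$, and a comparison of Serre spectral sequences (via the Euler-class differential $\operatorname{d}_{2m}$) shows that the induced fiber map has degree $(j/i)^m$, whence $\deg f=(j/i)^m$. Your brief aside about ``pulling back the universal bundle along $\varphi^i$ first'' is pointing toward this correct idea; you should follow that thread rather than the base-self-map approach.
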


\begin{proof}
By  Lemma \ref{lem:rational_model_E_k},  the manifolds $E_{r^m}$  and $M_{K_{\kappa r^m}}$ are rationally equivalent, for every $0<r\in\Z$. Therefore:
\begin{align*}
D(E_{i^m},E_{j^m}) \subset D_\Q(E_{i^m},E_{j^m})\cap\Z  & =  D_\Q (M_{K_{\kappa i^m}}, M_{K_{\kappa j^m}}) \cap \Z \\
&= \{0, (j/i)^m\}\cap \Z\,\text{ (by Lemma \ref{lem:degrees-K_d})}\\
&=\begin{cases}
\{0, (j/i)^m\}, &\text{ if $i|j$,}\\
\{0\}, &\text{ if $i{\not|} j$.}
\end{cases}
\end{align*}

The proof will be completed if we construct a map $f\colon E_{i^m}\to E_{j^m}$ of degree $(j/i)^m$ when $i|j.$
To this end, let us  suppose that  $j=di$, $d\in\Z$, and recall that unstable Adams operations  satisfy that  $\varphi^j \simeq \varphi^d\circ \varphi^i$. Therefore, by construction (see Definition \ref{def:E_r}) $\phi_j \simeq \varphi^d\circ \phi_i$. Let $f\colon E_{i^m}\to E_{j^m}$  be the map obtained by the universal property of pullbacks in the following homotopy commutative diagram:  \begin{equation}\label{eq:diagramas_pullback}
\begin{tikzcd}
E_{i^m}
\arrow[rr]
\arrow[ddr, bend right]
\arrow[dr, dashed, "f"]
 & &B\mathrm{SO}(2m-1) \arrow[dr, bend left=20, "\varphi^d"] \arrow[dd] & \\
& E_{j^m} \arrow[rr, crossing over] \arrow[d] \arrow[dr, phantom, "\scalebox{1.5}{$\lrcorner$}" , very near start]
& &  B\mathrm{SO}(2m-1)\arrow[d]\\
& \Sigma \arrow[r, "\phi_i"]
& B\mathrm{SO}(2m) \arrow[r, "\varphi^d"] & B\mathrm{SO}(2m)
\end{tikzcd}
\end{equation}
where the right-hand side comes from Diagram \eqref{eq:Adams_inclusions}.
Diagram \eqref{eq:diagramas_pullback} gives rise to a commutative diagram of spherical fiber sequences
\begin{equation}\label{eq:dos_fibraciones}
 \begin{tikzcd}
S^{2m-1} \arrow[r, "\widetilde{f}"] \arrow[d] & S^{2m-1} \arrow[d]\\
E_{i^m} \arrow[r, "f"] \arrow[d]  & E_{j^m} \arrow[d]\\
\Sigma \arrow[r, equal] & \Sigma,
\end{tikzcd}
\end{equation}
whose associated Serre spectral sequences (Sss) can be compared via the edge morphisms given by naturality: the Sss associated to the left (respectively, right) side of diagram \eqref{eq:dos_fibraciones} is fully determined by the differential $$\operatorname{d}_{2m}([S^{2m-1}])=\kappa i^m[\Sigma]\,\,\text{ (respectively, $\operatorname{d}_{2m}([S^{2m-1}])=\kappa j^m[\Sigma]$),}$$ and since by naturality  $$\operatorname{d}_{2m}\big(H^*(\widetilde{f})([S^{2m-1}])\big)=H^*(Id_\Sigma)\big(\operatorname{d}_{2m}([S^{2m-1}])\big)$$ we obtain that $\deg(\widetilde{f})=(j/i)^m$.

Now, the cohomological fundamental class $[E_{i^m}]$ (respectively,\ $[E_{j^m}]$) is represented by the class $[S^{2m-1}]\otimes[\Sigma]$ in the $E^{2m-1,2m}_\infty$-term of the Sss associated to the left (respectively, right) fiber sequence in diagram \eqref{eq:dos_fibraciones}. Hence by naturality
\begin{align*}
    H^*(f)([E_{j^m}])&=H^*(\widetilde{f})([S^{2m-1}])\otimes H^*(Id_\Sigma)([\Sigma])]\\
    &=\big((j/i)^m[S^{2m-1}]\big)\otimes [\Sigma]\\
    &=(j/i)^m[E_{i^m}]
\end{align*}
and therefore $\deg(f)=(j/i)^m$.
\end{proof}

\begin{remark}
Notice that manifolds $E_{r^m}$ and $E_{-r^m}$ differ in just the orientation. Hence, for every other oriented closed connected $(4m-1)$-manifold $N$,  the mapping set degree is  $D(E_{-r^m},N)=-D(E_{r^m},N)$ and $D(N,E_{-r^m})=-D(N,E_{r^m})$.
\end{remark}

Prior to proving Theorem \ref{thm:main-many-dimensions-manifolds}, we need an integral version of Proposition \ref{prop:3_rational} (see Proposition \ref{prop:iteratedsums_simply_connected}). First, we prove the following result:

\begin{lemma}\label{lem:fix.3.3..simply.conn}
Let $M,N$ be oriented closed connected $n$-manifolds, $n>2$, and $d\in D(M,N)$, $d\neq 0$. If $N$ is simply connected, then there exists a map $h\colon M\to N$ that induces a map between discs $h|_{D_{M}}\colon D_{M}\to D_{N}$ such that $D_{M}=h^{-1}(D_{N})$, $h\vert_{\partial D_{M}}\colon \partial D_{M}\to \partial D_{N}$,  and $\deg(h)=\deg(h\vert_{\partial D_{M}})=d$.
\end{lemma}

\begin{proof}
We take a map $f\colon M\to N$ of degree $d$, that we can assume to be smooth.
Take a regular value $y_0\in N$ of $f$, and let $x_1,\ldots, x_s$ be
the preimages. We take a collection of smooth regular curves $L_j$ from
$x_1$ to $x_j$, $j=2,\ldots, s$, which do not intersect each other
except at the original point $x_1$, and write $L=\bigcup\limits_{j=2}^s L_j$. 

Around each $L_j$ we are going to build a spindle as follows:
Take a parametrization $\gamma_j\colon [0,1]\to L_j$, with $\gamma_j(0)=x_1$,
$\gamma_j(1)=x_j$, and a tubular closed neighbourhood $\Gamma_j\colon [0,1]\times
D_{\epsilon}^{n-1}\to U_j\subset M$ around $L_j$. The spindle
is the subset
 $$
 S_j^\epsilon
 :=\{ \Gamma_j(t,x)\,|\,\, |x| \leq \epsilon t(1-t), t\in [0,1]\}.
 $$
Taking $\epsilon>0$ small enough, we can assume $S_j^\epsilon\cap S_{j'}^\epsilon=\{x_1\}$ for $j\ne j'$. Then, there exists a retraction 
$r\colon M\to M$ such that it is the identity on $M\smallsetminus\bigcup S_j^\epsilon$,
and sends $S_j^{\epsilon/2}$ to $L_j$ so that $r(\Gamma_j(t, t(1-t) \bar{x}))= \gamma_j(t)$ for $\bar{x}\in D_{\epsilon/2}^{n-1}$ . Indeed,
 if $\Gamma_j(t,x)\in S^\epsilon_j$ write $(t,x)=(t,t(1-t)\bar{x})$ where $\bar{x}\in D^{n-1}_{\epsilon}$, and define
 $$
 r(\Gamma_j(t, t(1-t)\bar{x}))= \left\{
  \begin{array}{ll} \gamma_j(t), & |\bar{x}|\leq \epsilon/2.  \\
  \Gamma_j(t,(\frac2\epsilon |\bar{x}| - 1) t(1-t)\bar{x}), 
  & \epsilon/2 \leq |\bar{x}| \leq \epsilon.
 \end{array}\right.
 $$
Now, let $\hat{f}= f\circ r\colon M\to N$, which has degree $d$ and
maps $S_j^{\epsilon/2}$ to the curve $R_j=f(L_j)$ in $N$.
Note that $\hat{f}^{-1}(y_0)=\{x_1,\ldots, x_s\}.$

Since $N$ is simply connected we can construct a contraction from the loop $R_j$ to $y_0$ being careful not to cross $y_0$, as follows. The curve
$R_j$ is parametrized by $\beta_j={f}\circ \gamma_j$. Take a small 
ball $B_\delta(y_0)$ and two points $0<a<b<1$ such that $\beta_j([0,a]),
\beta_j([b,1])\subset B_\delta(y_0)$ and $\beta_j(a),\beta_j(b)\in 
\partial B_\delta(y_0)$. Using that $N\smallsetminus B_\delta(y_0)$ is simply connected, we
construct a homotopy of $\beta_j|_{[a,b]}$ to a path in
$\overline{B_\delta(y_0)}$ from $\beta_j(a)$ to $\beta_j(b)$. This gives
a homotopy from $\beta_j$ to a loop  in $\bar B_\delta(y_0)$
not crossing $y_0$. Next we take a radial retraction to $y_0$, and
by juxtaposition, we get a homotopy $H_j(t,s)$ from $\beta_j(t)$ to
the constant path $H_j(t,1)=y_0$, such that $H_j(t,s)\neq y_0$,
if $s<1$ and $t\in (0,1)$.

Now we define $\tilde{f}\colon M\to N$ as follows. First $\tilde{f}=\hat{f}$
on $M\smallsetminus S_j^{\epsilon/2}$. Now on $S_j^{\epsilon/2}$, we
set 
 $$
 \tilde{f}(\Gamma_j(t,t(1-t) \bar{x})) = H_j(t, 1-2|\bar{x}|/\epsilon),
 $$
for $\bar{x}\in D^{n-1}_{\epsilon/2}$. This is continuous, because
at the boundary $|\bar{x}|=\epsilon/2$, we have
$\tilde{f} =\beta_j(t)$. We also have $\tilde{f}|_{L_j}=y_0$,
when $|\bar{x}|=0$, and these are the only possible points such that they
take value $y_0$.  Therefore
 $$
 \tilde{f}^{-1}(y_0)=\bigcup L_j =L.
 $$
Finally, we contract $L$ in $M$, resulting in  $M/L$ being homeomorphic
to $M$ since $L$ is a collection of segments with a common 
original point. Let $p\in M$ denote the point corresponding to
$L$ under the map $M \to M/L\cong M$. Then we define
$h\colon M\cong M/L \buildrel{\tilde{f}}\over{\longrightarrow} N$ 
satisfying $h^{-1}(y_0)=\{p\}$. By choosing sufficiently small $\delta$ and $\xi>0$,  we can homotope $h$ so that $D^n_\delta(p)=h^{-1}( D^n_\xi(y_0))$ and $h|_{\partial D_\delta(p)}\colon S^{n-1}\to \partial  D^n_\xi(y_0)\cong  S^{n-1}$, which has degree $d$.
\end{proof}

\begin{proposition}\label{prop:iteratedsums_simply_connected}
Let $M_i, N_i, \, i=1, \ldots, r$, be oriented closed connected $n$-manifolds,
 $n>2$, satisfying:
\begin{enumerate}
    \item $N_i$ is simply connected, $i=1, \ldots, r$;
    \item $\pi_{n-1}(N_i)=0$ for $i=1, \ldots, r$; 
    \item $D(M_i,N_j)=\{0\}$ for $i\ne j$.
\end{enumerate}
Then 
$$D(\operatornamewithlimits{\#}_{j=1}^r M_j, \operatornamewithlimits{\#}_{i=1}^r N_i)=  \bigcap_{i=1}^r D(M_i,N_i).$$  
\end{proposition}
\begin{proof}
By combining Lemmas \ref{lem:lemas_CL+NWW} and \ref{lem:lema_NWW}, it follows directly that:
 $$D(\operatornamewithlimits{\#}_{j=1}^r M_j, \operatornamewithlimits{\#}_{i=1}^r N_i)  \subset \bigcap_{i=1}^r D(\operatornamewithlimits{\#}_{j=1}^r M_j,N_i) = \bigcap_{i=1}^r D(M_i, N_i).$$

Conversely, let $d\in  D(M_1, N_1)   \cap D(M_2, N_2)$. 
By Lemma \ref{lem:fix.3.3..simply.conn}, there exists $h_i\colon M_i\to N_i$ that induces a map between discs $h_i|_{D_{M_i}}\colon D_{M_i}\to D_{N_i}$ such that $D_{M_i}=h_i^{-1}(D_{N_i})$ and $h_i\vert_{\partial D_{M_i}}\colon \partial D_{M_i}\to \partial D_{N_i}$, and $\deg(h_i)=\deg(h_i\vert_{\partial D_{M_i}})=d$, $i=1,2$. As $\deg(h_1\vert_{\partial D_{M_1}})=\deg(h_2\vert_{\partial D_{M_2}})=d$, we homotope $h_2$ so that 
$h_2\vert_{\partial D_{M_2}}=h_1\vert_{\partial D_{M_1}}$. 
Now, gluing $M_1\# M_2$ along $\partial D_{M_i}$, $i=1,2$, and $N_1\# N_2$ along $\partial D_{N_i}$, $i=1,2$, gives us a well-defined a map
$$
h_1\#h_2\colon M_1\#M_2\to N_1\#N_2
$$
whose degree, by construction,  is precisely $d$. Therefore
$$
D(M_1,N_1) \cap D(M_2,N_2)  \subset D(M_1\# M_2,N_1\#N_2).
 $$
 Now, by an inductive argument
 \begin{multline*}
 \bigcap_{i=1}^r D(M_i, N_i)=\\
 \big(\bigcap_{i=1}^{r-1} D(M_i, N_i)\big)\cap D(M_r,N_r)\subset D\big((\operatornamewithlimits{\#}_{j=1}^{r-1} M_j)\# M_r,(\operatornamewithlimits{\#}_{i=1}^{r-1} N_i)\#N_r\big)=\\
 D(\operatornamewithlimits{\#}_{j=1}^r M_j, \operatornamewithlimits{\#}_{i=1}^r N_i)
 \end{multline*}
 which concludes the proof.    
\end{proof}

\subsection*{Proof of Theorem \ref{thm:main-many-dimensions-manifolds}}
Let $\Sigma$ be an oriented closed $k$-connected $2m$-manifold  satisfying that $\Sigma_{(0)}$ is inflexible and $\pi_j(\Sigma_{(0)})=0$ for $j\geq 2m-1$.
Let $A=\{0, d_1,\ldots, d_n\}$ where $d_1,d_2,\ldots,d_n$ are pairwise different non-zero integers.

According to Proposition \ref{prop:clave-aritmetica}, there exist finite sequences $B(i)$, $i=0,\ldots, n$, of not necessarily pairwise distinct non-zero integers,  such that every element in $B(i)$ can be written as $\pm r^{m}$ for $0<r\in\Z$  with $\gcd(r, m!) = 1$, and
\begin{equation*}
A=\bigcap_{i=0}^n  S_{B(i)}.
\end{equation*}

Choose pairwise distinct prime numbers $q_0,q_1,\ldots, q_n$, in such a way that  $$q_j>\max\{|b|\,\,|\, b\in B(i), i=0,\ldots, n\}$$ and $\gcd(q_j,m!)=1$, for $j=0,\ldots, n$. Let $\alpha_i=q_i^m\prod\limits_{b\in B(i)}b$, for every $i=0,\ldots,n$. Note that $\alpha_i$ and $\alpha_i/b$, $b\in B(i)$, are integers that, up to a sign, can be written as $r^m$ for some positive integer $r$ such that $\gcd (r,m!)=1.$ Hence, following the notation in Definition \ref{def:E_r}, we can define the following $(4m-1)$-manifolds
\begin{align*}
M_i &= \operatornamewithlimits{\#}_{b\in B(i)} E_{\alpha_i/b}  \\
N_i & = E_{\alpha_i}
\end{align*}
for $i=0,\ldots,n$.
According to Lemma \ref{lem:grados_E_i} and Lemma \ref{lem:lemas_CL+NWW}, we deduce that
\begin{gather*}
D(M_i,N_i) =S_{B(i)}\, ,\,\text{and}\\
D(M_i,N_j)=\{0\},\,\text{for $i\neq j$.}
\end{gather*}

Finally, we construct
  \begin{align*}
  M&= M_0\# M_1\# \ldots \# M_n, \\
  N&= N_0\# N_1\# \ldots \# N_n.
  \end{align*}
If $k>0$, thus the $N_i$ are simply connected, according to Proposition \ref{prop:iteratedsums_simply_connected}, we obtain that
 $$
 D(M,N)=\bigcap_{i=0}^n  S_{B(i)} = A.
$$
If $k=0$, we apply Proposition \ref{prop:iteratedsums} to get that there exists $\ell\geq 0$ such that 
 $$
 D(M\# H(4m-1,\ell),N)=\bigcap_{i=0}^n  S_{B(i)} = A.
$$

\bibliographystyle{abbrv}
\bibliography{CMV-references}
\end{document}